\titleformat{\subsection}{\it}{\thesubsection.\enspace}{1pt}{}
\newtheorem{theo}{Theorem}[section]
\newtheorem{lemm}[theo]{Lemma}
\newtheorem{prop}[theo]{Proposition}
\newtheorem{rema}[theo]{Remark}
\numberwithin{equation}{section}
\newcommand\lm{{\lesssim}}
\begin{document}
\title{Large time behavior of global strong solutions to the 2-D compressible FENE dumbbell model
\hspace{-4mm}
}

\author{ Zhaonan $\mbox{Luo}^1$ \footnote{email: 1411919168@qq.com},\quad
Wei $\mbox{Luo}^1$\footnote{E-mail:  luowei23@mail2.sysu.edu.cn} \quad and\quad
 Zhaoyang $\mbox{Yin}^{1,2}$\footnote{E-mail: mcsyzy@mail.sysu.edu.cn}\\
 $^1\mbox{Department}$ of Mathematics,
Sun Yat-sen University, Guangzhou 510275, China\\
$^2\mbox{Faculty}$ of Information Technology,\\ Macau University of Science and Technology, Macau, China}

\date{}
\maketitle
\hrule

\begin{abstract}
In this paper, we mainly study large time behavior of the strong solutions to the 2-D compressible finite extensible nonlinear elastic (FENE) dumbbell model. The Fourier splitting method yields that the $L^2$ decay rate is $\ln^{-l}(e+t)$ for any $l\in\mathbb{N}$. By virtue of the time weighted energy estimate, we can improve the decay rate to $(1+t)^{-\frac{1}{4}}$. Under the low-frequency condition and by the Littlewood-Paley theory, we show that the solutions belong to some Besov space with negative index and obtain the optimal $L^2$ decay rate.  \\
\vspace*{5pt}
\noindent {\it 2010 Mathematics Subject Classification}: 35Q30, 76B03, 76D05, 76D99.

\vspace*{5pt}
\noindent{\it Keywords}: The compressible FENE dumbbell model; Besov spaces; Time decay rate.
\end{abstract}

\vspace*{10pt}

\tableofcontents

\section{Introduction}
In this paper we are concerned with the compressible finite extensible nonlinear elastic (FENE) dumbbell model \cite{Bird1977,Doi1988}:
\begin{align}\label{eq0}
\left\{
\begin{array}{ll}
\varrho_t+div(\varrho u)=0 , \\[1ex]
(\varrho u)_t+div(\varrho u\otimes u)-div\Sigma{(u)}+\nabla_x{P(\varrho)}=div~\tau, \\[1ex]
\psi_t+u\cdot\nabla\psi=div_{R}[- \sigma(u)\cdot{R}\psi+\nabla_{R}\psi+\nabla_{R}\mathcal{U}\psi],  \\[1ex]
\tau_{ij}=\int_{B}(R_{i}\nabla_{j}\mathcal{U})\psi dR, \\[1ex]
\varrho|_{t=0}=\varrho_0,~~u|_{t=0}=u_0,~~\psi|_{t=0}=\psi_0, \\[1ex]
(\nabla_{R}\psi+\nabla_{R}\mathcal{U}\psi)\cdot{n}=0 ~~~~ \text{on} ~~~~ \partial B(0,R_{0}) .\\[1ex]
\end{array}
\right.
\end{align}
In \eqref{eq0}, $\varrho(t,x)$ stands for the density of the solvent, $u(t,x)$ denotes the velocity of the polymeric liquid and $\psi(t,x,R)$ represents the distribution function for the internal configuration. The notation $\Sigma{(u)}=\mu(\nabla u+\nabla^{T} u)+\mu'div~u\cdot Id$ stands for the stress tensor, with the viscosity coefficients $\mu$ and $\mu'$ satisfying $\mu>0$ and $2\mu+\mu'>0$. The pressure satisfies $P(\varrho)=\varrho^\gamma$ with $\gamma\geq1$. Moreover, $\tau$ stands for an additional stress tensor. $\sigma(u)$ is the drag term. In the general case, $\sigma(u)=\nabla u$.

The compressible FENE dumbbell model describes the system coupling fluids and polymers. A polymer is described as an "elastic dumbbell" consisting of two "beads" joined by a spring that can be modeled by $R$. The finite extensibility of the polymers means that $R$ is bounded in ball $ B=B(0,R_{0})$. Let $\mathcal{U}(R)=-k\log(1-(\frac{|R|}{|R_{0}|})^{2})$ represents the potential with $k>0$. The polymer particles are described by $\psi(t,x,R)$ satisfies $\int_{B} \psi(t,x,R)dR =1$. At the level of liquid, the system couples compressible Navier-Stokes (CNS) equations with a Fokker-Planck equation. This is a micro-macro model (For more details, one can refer to $\cite{Masmoudi2008}$ and $\cite{Masmoudi2013}$).

%
%
%


In this paper, we take $R_{0}=1$ and $x\in\mathbb{R}^2$.
Notice that $(\varrho,u,\psi)$ with $\varrho=1$, $u=0$ and $$\psi_{\infty}(R)=\frac{e^{-\mathcal{U}(R)}}{\int_{B}e^{-\mathcal{U}(R)}dR}=\frac{(1-|R|^2)^k}{\int_{B}(1-|R|^2)^kdR},$$
is a trivial solution of \eqref{eq0}. Taking the perturbations near the global equilibrium:
\begin{align*}
\rho=\varrho-1,~~u=u,~~g=\frac {\psi-\psi_\infty} {\psi_\infty},
\end{align*}
then we can rewrite \eqref{eq0} as the following system:
\begin{align}\label{eq1}
\left\{
\begin{array}{ll}
\rho_t+div~u(1+\rho)=-u\cdot\nabla\rho , \\[1ex]
u_t-\frac 1 {1+\rho} div\Sigma{(u)}+\frac {P'(1+\rho)} {1+\rho} \nabla\rho=-u\cdot\nabla u+\frac 1 {1+\rho} div~\tau, \\[1ex]
g_t+\mathcal{L}g=-u\cdot\nabla g-\frac 1 {\psi_\infty}\nabla_R\cdot(\nabla uRg\psi_\infty)-div~u-\nabla u R\nabla_{R}\mathcal{U},  \\[1ex]
\tau_{ij}(g)=\int_{B}(R_{i}\nabla_{Rj}\mathcal{U})g\psi_\infty dR, \\[1ex]
\rho|_{t=0}=\rho_0,~~u|_{t=0}=u_0,~~g|_{t=0}=g_0, \\[1ex]
\psi_\infty\nabla_{R}g\cdot{n}=0 ~~~~ \text{on} ~~~~ \partial B(0,1) ,\\[1ex]
\end{array}
\right.
\end{align}
where $\mathcal{L}g=-\frac 1 {\psi_\infty}\nabla_R\cdot(\psi_\infty\nabla_{R}g)$.

{\bf Remark.} As in the reference \cite{Masmoudi2013}, one can deduce that $\psi=0$ on $\partial B(0,1)$.

\subsection{Short reviews for the incompressible FENE model}
Let us review some mathematical results about the incompressible FENE model. At first, M. Renardy \cite{Renardy} established the local well-posedness in Sobolev spaces with potential $\mathcal{U}(R)=(1-|R|^2)^{1-\sigma}$ for $\sigma>1$. Later, B. Jourdain, T. Leli\`{e}vre, and
C. Le Bris \cite{Jourdain} proved local existence of a stochastic differential equation with potential $\mathcal{U}(R)=-k\log(1-|R|^{2})$ in the case $k>3$ for a Couette flow. H. Zhang and P. Zhang \cite{Zhang-H} proved local well-posedness of (1.4) with $d=3$ in weighted Sobolev spaces. For the co-rotation case, F. Lin, P. Zhang, and Z. Zhang \cite{F.Lin} obtain a global existence results with $d=2$ and $k > 6$. If the initial data is perturbation around equilibrium, N. Masmoudi \cite{Masmoudi2008} proved global well-posedness of (1.4) for $k>0$. In the co-rotation case with $d=2$, he \cite{Masmoudi2008} obtained a global result for $k>0$ without any small conditions. A. V. Busuioc, I. S. Ciuperca, D. Iftimie and L. I. Palade \cite{Busuioc} obtain a global existence result with only the small condition on $\psi_0$. The global existence of weak solutions in $L^2$ was proved recently by N. Masmoudi \cite{Masmoudi2013} under some entropy conditions.
Recently, M. Schonbek \cite{Schonbek} studied the $L^2$ decay of the velocity for the co-rotation
FENE dumbbell model, and obtained the
decay rate $(1+t)^{-\frac{d}{4}+\frac{1}{2}}$, $d\geq 2$ with $u_0\in L^1$.
Moreover, she conjectured that the sharp decay rate should be $(1+t)^{-\frac{d}{4}}$,~$d\geq 2$.
However, she failed to get it because she could not use the bootstrap argument as in \cite{Schonbek1985} due to the
additional stress tensor.  Recently, W. Luo and Z. Yin \cite{Luo-Yin} improved Schonbek's result
and showed that the decay rate is $(1+t)^{-\frac{d}{4}}$ with $d\geq 3$ and $\ln^{-l}(1+t)$ with $d=2$ for any $l\in\mathbb{N^+}$.
This result shows that M. Schonbek's conjecture is true when $d \geq3$. More recently, W. Luo and Z. Yin \cite{Luo-Yin2} improved the decay rate to $(1+t)^{-\frac{d}{4}}$ with $d=2$. In \cite{Jia-Ye}, Y. Jia and H. Ye also prove the optimal time decay rate via a different approach.

\subsection{Short reviews for the CNS equations}
Taking $\psi\equiv 0$, the system \eqref{eq0} reduce to the CNS equations. In order to study about the \eqref{eq0}, we cite some reference about the CNS equations. The first local existence and uniqueness results were obtained by J. Nash \cite{miaocompressns23} for smooth initial data without vacuum. Later on, A. Matsumura and T. Nishida \cite{Matsumura} proved the global well-posedness for smooth data close to equilibrium with $d=3$. In \cite{miaocompressns18}, A. V. Kazhikhov and V. V. Shelukhin established the first global existence result with large data in one dimensional space under some suitable condition on $\mu$ and $\lambda$. If $\mu$ is constant and $\lambda(\rho)=b\rho^\beta$, X. Huang and J. Li \cite{Huang} obtained a global existence and uniqueness result for large initial data in two dimensional space(See also \cite{miaocompressns26}). In \cite{miaocompressns25}, X. Huang, J. Li, and Z. Xin proved the global well-posedness with vacuum. The Blow-up phenomenons were studied by Z. Xin et al
in \cite{Xin98,miaocompressns28,miaocompressns27}. Concerning the global existence of weak solutions for the large initial data, we may refer to \cite{miaocompressns2,miaocompressns3,miaocompressns21,Vasseur}.

To catch the scaling invariance property of the CNS equations. R. Danchin introduced the "critical spaces" in his series papers \cite{miaocompressns11,miaocompressns12,miaocompressns14,miaocompressns15,miaocompressns155} and obtained several important existence and uniqueness results. Recently, Q. Chen, C. Miao and Z. Zhang \cite{miao} proved the local existence and uniqueness in critical homogeneous Besov spaces. The ill-posedness results was obtained in \cite{miaoill-posedness}. In \cite{miaocompressns24}, L. He, J. Huang and C. Wang proved the global stability with $d=3$ i.e. for any perturbed solutions will remain close to the reference solutions if initially they are close to another one.

The large time behaviour of the global solutions $(\rho,u)$ to the CNS equations with $d=3$ was firstly proved by A. Matsumura and T. Nishida in \cite{Matsumura}. Recently, H. Li and T. Zhang \cite{Li2011Large} obtain the optimal time decay rate for the CNS equations with $d=3$ by spectrum analysis in Sobolev spaces. R. Danchin  and J. Xu \cite{2016Optimal} studied about the large time behaviour in the critical Besov space with $d\geq 2$. J. Xu \cite{Xu2019} obtained the optimal time decay rate with a small low-frequency assumption in some Besov spaces with negative index. More recently, Z. Xin  and J. Xu \cite{2018Optimal} studied about the large time behaviour and removed the smallness of low frequencies.

\subsection{Main results}

The well-posedness for the system \eqref{eq0} was established by J. Ning, Y. Liu and T. Zhang \cite{2017Global}. They proved the global well-posedness for \eqref{eq0} if the initial data is close to the equilibrium and $R\in\mathbb{R}^3$. For the compressible FENE system, Z. Luo, W. Luo and Z. Yin \cite{2021Luo} proved the global well-posedness results with $d\geq2$ and studied large time behaviour with $d\geq3$.

Recently, N. Masmoudi \cite{2016Equations} is concerning with the long time behavior for polymeric models. Large time behaviour of the compressible FENE system with $d\geq3$ has been studied in \cite{2021Global} and \cite{2021Luo}. To our best knowledge, for $d=2$, large time behaviour for the compressible FENE system \eqref{eq1} has not been studied yet. This problem is interesting and more difficult than the case with $d\geq 3$. In this paper, we will study about the optimal time decay rate for global strong solutions in $L^2$. The proof is based on the Littlewood-Paley decomposition theory and the Fourier splitting method. Motivated by \cite{He2009} and \cite{2018Global}, we deal with the linear term $div~\tau$ by virtue of the cancellation relation in Fourier space. Firstly we can prove the logarithmic decay rate for the velocity via the method in \cite{Luo-Yin}. By virtue of the time weighted energy
estimate and the logarithmic decay rate, then we improve the time decay rate to $(1+t)^{-\frac{1}{4}}$ for the velocity in $L^2$. The main difficult to get optimal time decay rate is that we can not get any information of $u$ in $L^1$ from \eqref{eq1}. Fortunately, similar to \cite{Tong2017The}, we can prove $\|u\|_{L^\infty(0,\infty; \dot{B}^{-\frac 1 2}_{2,\infty})}\leq C$ from \eqref{eq1} by using the time decay rate $(1+t)^{-\frac{1}{4}}$. Then we improve the time decay rate to $(1+t)^{-\frac{5}{16}}$ for the velocity in $L^2$ by the Littlewood-Paley decomposition theory and the Fourier splitting method. We obtain a slightly weaker conclusion $\|u\|_{L^\infty(0,\infty; \dot{B}^{-1}_{2,\infty})}\leq C$ from \eqref{eq1} by using the time decay rate $(1+t)^{-\frac{5}{16}}$. Finally, we obtain optimal time decay rate for the velocity in $L^2$ by the Littlewood-Paley decomposition theory and the standard Fourier splitting method.

Our main result can be stated as follows.
\begin{theo}[Large time behaviour]\label{th2}
Let $d=2$. Let $(\rho,u,g)$ be a global strong solution of \eqref{eq1} with the initial data $(\rho_0,u_0,g_0)$ under the condition in Theorem \ref{th1}. In addition, if $(\rho_0,u_0)\in \dot{B}^{-1}_{2,\infty}\times \dot{B}^{-1}_{2,\infty}$ and $g_0\in \dot{B}^{-1}_{2,\infty}(\mathcal{L}^2)$, then there exists a constant $C$ such that
\begin{align}\label{decay0}
\|\rho\|_{L^2}+\|u\|_{L^2}\leq C(1+t)^{-\frac 1 2}
\end{align}
and
\begin{align}
\|g\|_{L^2(\mathcal{L}^2)}\leq C(1+t)^{-1}.
\end{align}
\end{theo}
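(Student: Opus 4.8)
The strategy is the bootstrap scheme described in the introduction: starting from the crude logarithmic decay, successively upgrade the $L^2$ decay rate of $u$ through the intermediate rates $(1+t)^{-1/4}$, $(1+t)^{-5/16}$, and finally the optimal $(1+t)^{-1/2}$, by alternating two ingredients — (i) a Fourier-splitting energy estimate and (ii) propagation of membership in a homogeneous Besov space of negative index. I would work with the standard energy functional $E(t)$ controlling $\|\rho\|_{H^s}^2+\|u\|_{H^s}^2+\|g\|_{H^s(\mathcal{L}^2)}^2$ (with the polymer part measured in the weighted space, and with the $\mathcal L$-dissipation $\|\nabla_R g\|^2$ available from the Fokker–Planck structure and the Poincaré inequality on the ball), which by Theorem \ref{th1} stays small and bounded. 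The essential analytic input is the dissipation structure of the linearized system: after diagonalizing, $\rho$ and $\mathrm{div}\,u$ behave like the compressible Navier–Stokes pair, the divergence-free part of $u$ is parabolic, and $g$ has the exponentially stable operator $\mathcal L$; the one dangerous term is the linear coupling $\mathrm{div}\,\tau$ in the $u$-equation, which I would handle — following the cited cancellation trick of He and the authors' earlier work — by combining the $u$- and $g$-equations so that $\mathrm{div}\,\tau(g)$ and the symmetric source term $\nabla u R\nabla_R\mathcal U$ cancel at the level of the energy/Fourier estimate, leaving only quadratic (hence higher-order) remainders.

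Concretely, the first step is to reproduce the argument of \cite{Luo-Yin}: the basic energy inequality plus the Fourier splitting method (integrating the frequency-localized energy over $\{|\xi|\le R(t)\}$ with $R(t)^2\sim \frac{c}{1+t}$) gives $\|\rho\|_{L^2}^2+\|u\|_{L^2}^2+\|g\|_{L^2(\mathcal L^2)}^2\lesssim \ln^{-l}(e+t)$ for every $l$, using only that the low-frequency part of the solution stays in $L^2$. The second step is the time-weighted estimate: multiplying the energy inequality by $(1+t)^\alpha$ and using the logarithmic decay to absorb the low-frequency "error ball" contribution, one upgrades to $\|u\|_{L^2}\lesssim (1+t)^{-1/4}$ (and correspondingly better for $\rho,g$). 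The third step is the Besov bootstrap: from the $(1+t)^{-1/4}$ rate one shows $\|u\|_{L^\infty_t \dot B^{-1/2}_{2,\infty}}\le C$ — the point is that the Duhamel formula for $u$ in Besov norm requires controlling $\|\,$nonlinear terms$\,\|_{\dot B^{-1/2}_{2,\infty}}$ in time, and products like $u\cdot\nabla u$, $\frac{1}{1+\rho}\mathrm{div}\,\Sigma(u)$, etc., are handled by paraproduct estimates together with the decay already proved; then re-running Fourier splitting with the improved low-frequency information $\|\hat u(\xi)\|\lesssim |\xi|^{-1/2}$ on $\{|\xi|\le R(t)\}$ yields $\|u\|_{L^2}\lesssim (1+t)^{-5/16}$. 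Repeating once more gives $\|u\|_{L^\infty_t\dot B^{-1}_{2,\infty}}\le C$ and then the optimal $(1+t)^{-1/2}$.

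For the polymer part, once the optimal rate for $(\rho,u)$ is in hand, I would close with a final time-weighted estimate on $g$ alone: the equation $g_t+\mathcal L g = (\text{transport and coupling terms})$ has the coercive operator $\mathcal L\ge \lambda_1>0$ on the mean-zero part (and the source term $\mathrm{div}\,u + \nabla u R\nabla_R\mathcal U$ carries the $(1+t)^{-1/2}$ decay of $\nabla u$), so multiplying by $(1+t)^{2}$ and integrating yields $\|g\|_{L^2(\mathcal L^2)}\lesssim (1+t)^{-1}$, one order better than $u$, consistent with the $\dot B^{-1}_{2,\infty}(\mathcal L^2)$ data. The main obstacle, as flagged in the introduction, is precisely the absence of any $L^1$ control on $u$: every stage of the low-frequency analysis must be fed by the Besov-space information rather than by an $L^1\hookrightarrow \dot B^{0}_{1,\infty}$-type bound, and the delicate point is verifying that the nonlinear terms — in particular those coming from the density-dependent coefficients $\frac{1}{1+\rho}$ and the stress $\tau(g)$ — are admissible in $\dot B^{-1/2}_{2,\infty}$ and then in $\dot B^{-1}_{2,\infty}$ uniformly in time; this is where the paraproduct bookkeeping, combined carefully with the partial decay rates available at each step, has to be done with care so that no term loses more regularity or decay than the bootstrap can afford.
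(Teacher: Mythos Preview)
Your bootstrap scheme is exactly the one the paper carries out: logarithmic decay $\to (1+t)^{-1/4}\to$ membership in $\dot B^{-1/2}_{2,\infty}\to (1+t)^{-5/16}\to$ membership in $\dot B^{-1}_{2,\infty}\to (1+t)^{-1/2}$, with the $u$--$g$ cancellation handling $\mathrm{div}\,\tau$ throughout. Two points deserve correction, though.

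First, and more importantly, your final argument for $g$ does not close as stated. You say the linear source $\mathrm{div}\,u+\nabla u\,R\nabla_R\mathcal U$ carries the $(1+t)^{-1/2}$ decay of $\nabla u$; but the energy inequality for $g$ alone reads $\frac{d}{dt}\|g\|_{L^2(\mathcal L^2)}^2+c\|g\|_{L^2(\mathcal L^2)}^2\le C\|\nabla u\|_{L^2}^2$, and feeding in $\|\nabla u\|_{L^2}\lesssim(1+t)^{-1/2}$ only yields $\|g\|_{L^2(\mathcal L^2)}\lesssim(1+t)^{-1/2}$, not $(1+t)^{-1}$. What is actually needed is $\|\nabla u\|_{L^2}\lesssim(1+t)^{-1}$, i.e.\ the \emph{higher-order} energy bound $E^1_\eta(t)\lesssim(1+t)^{-2}$. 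The paper obtains this by running a parallel Fourier-splitting argument on the $\dot H^1$-level energy $E^1_\eta$ at every stage of the bootstrap (their Lemma~3.2 and Propositions~3.2, 3.3, 3.5), so that at the end one has both $E_\eta\lesssim(1+t)^{-1}$ and $E^1_\eta\lesssim(1+t)^{-2}$. You should make this second energy functional explicit in your plan; without it the $g$ rate stalls at $(1+t)^{-1/2}$.

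Second, a methodological remark: the paper does not use a Duhamel formula or paraproduct calculus to propagate the negative Besov regularity. Instead it performs a direct $L^2$ energy estimate on $\dot\Delta_j(\rho,u,g)$, multiplies by $2^{-2j\sigma}$, takes the $\ell^\infty_j$ supremum, and controls the nonlinear terms via the elementary embedding $L^{2/(1+\sigma)}\hookrightarrow\dot B^{-\sigma}_{2,\infty}$ together with Gagliardo--Nirenberg. This is lighter than what you sketch and avoids any delicate paraproduct bookkeeping; the time-integrability of the nonlinear terms in $\dot B^{-\sigma}_{2,\infty}$ then follows from the decay rates already in hand plus the auxiliary bound $\int_0^t(1+s)^\alpha\|\Lambda\nabla_R g\|_{L^2(\mathcal L^2)}^2\,ds\le C$ (again coming from the $E^1_\eta$ estimate).
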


\begin{rema}
Taking $\psi \equiv 0$ and combining with the result in \cite{Xu2019}, we can see that the $L^2$ decay rate for $(\rho,u)$ obtained in Theorem \ref{th2} is optimal.
\end{rema}

\begin{rema}
In previous papers, researchers usually add the condition $(\rho_0,u_0)\in L^1\times  L^1$ to obtain the optimal time decay rate. Since $L^1\hookrightarrow \dot{B}^{-1}_{2,\infty}$, it follows that our condition is weaker and the results still hold true for $(\rho_0, u_0)\in L^1\times  L^1$. Moreover, the assumption can be replaced with a weaker assumption $\sup_{j\leq j_0}2^{-j}\|\dot{\Delta}_j (\rho_0,u_0,g_0)\|_{L^2\times L^2\times L^2(\mathcal{L}^2)} <\infty$, for any $j_0\in \mathbb{Z}$.
\end{rema}

The paper is organized as follows. In Section 2 we introduce some notations and  give some preliminaries which will be used in the sequel. In Section 3 we prove the $L^2$ decay of solutions to the 2-D compressible FENE model
by using the Fourier splitting method, the Littlewood-Paley decomposition theory and the bootstrap argument.

\section{Preliminaries}
In this section we introduce some notations and useful lemmas which will be used in the sequel.

For $p\geq1$, we denote by $\mathcal{L}^{p}$ the space
$$\mathcal{L}^{p}=\big\{f \big|\|f\|^{p}_{\mathcal{L}^{p}}=\int_{B} \psi_{\infty}|f|^{p}dR<\infty\big\}.$$
We use the notation $L^{p}_{x}(\mathcal{L}^{q})$ to denote $L^{p}[\mathbb{R}^{2};\mathcal{L}^{q}]:$
$$L^{p}_{x}(\mathcal{L}^{q})=\big\{f \big|\|f\|_{L^{p}_{x}(\mathcal{L}^{q})}=(\int_{\mathbb{R}^{2}}(\int_{B} \psi_{\infty}|f|^{q}dR)^{\frac{p}{q}}dx)^{\frac{1}{p}}<\infty\big\}.$$

The symbol $\widehat{f}=\mathcal{F}(f)$ represents the Fourier transform of $f$.
Let $\Lambda^s f=\mathcal{F}^{-1}(|\xi|^s \widehat{f})$.
If $s\geq0$, we denote by $H^{s}(\mathcal{L}^{2})$ the space
$$H^{s}(\mathcal{L}^{2})=\{f\big| \|f\|^2_{H^{s}(\mathcal{L}^{2})}=\int_{\mathbb{R}^{2}}\int_B(|f|^2+|\Lambda^s f|^2)\psi_\infty dRdx<\infty\}.$$

Sometimes we write $f\lm g$ instead of $f\leq Cg$, where $C$ is a constant. We agree that $\nabla$ stands for $\nabla_x$ and $div$ stands for $div_x$.

For the convenience of readers, we recall the Littlewood-Paley decomposition theory and and Besov spaces.
\begin{prop}\cite{Bahouri2011}\label{pro0}
Let $\mathcal{C}$ be the annulus $\{\xi\in\mathbb{R}^2:\frac 3 4\leq|\xi|\leq\frac 8 3\}$. There exist radial function $\varphi$, valued in the interval $[0,1]$, belonging respectively to $\mathcal{D}(\mathcal{C})$, and such that
$$ \forall\xi\in\mathbb{R}^2\backslash\{0\},\ \sum_{j\in\mathbb{Z}}\varphi(2^{-j}\xi)=1, $$
$$ |j-j'|\geq 2\Rightarrow\mathrm{Supp}\ \varphi(2^{-j}\cdot)\cap \mathrm{Supp}\ \varphi(2^{-j'}\cdot)=\emptyset. $$
Further, we have
$$ \forall\xi\in\mathbb{R}^2\backslash\{0\},\ \frac 1 2\leq\sum_{j\in\mathbb{Z}}\varphi^2(2^{-j}\xi)\leq 1. $$
\end{prop}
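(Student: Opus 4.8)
The plan is to realize $\varphi$ as a dyadic difference of a single smooth radial cut-off, which is the classical construction behind \cite{Bahouri2011}. First I would fix a radial $\chi\in\mathcal{D}(\mathbb{R}^2)$ of the form $\chi(\xi)=\theta(|\xi|^2)$ with $\theta:[0,\infty)\to[0,1]$ smooth and \emph{non-increasing}, $\theta\equiv1$ on $[0,9/16]$ and $\theta\equiv0$ on $[16/9,\infty)$; such a $\theta$ is produced by convolving the indicator of a slightly smaller interval with a mollifier. Then $\chi\equiv1$ on $\overline{B(0,3/4)}$, $\mathrm{Supp}\,\chi\subset B(0,4/3)$, $0\le\chi\le1$, and $\chi$ is non-increasing along rays. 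I then set
$$\varphi(\xi):=\chi(\xi/2)-\chi(\xi).$$
Since $|\xi/2|\le|\xi|$ and $\chi$ is non-increasing along rays, $\chi(\xi/2)\ge\chi(\xi)$, so $0\le\varphi\le\chi(\xi/2)\le1$, and $\varphi$ is radial. For $|\xi|\le3/4$ both terms equal $1$, while for $|\xi|\ge8/3$ both terms vanish (as $|\xi/2|\ge4/3$); hence $\{\varphi\ne0\}\subset\{3/4<|\xi|<8/3\}$, so $\varphi\in\mathcal{D}(\mathcal{C})$ with values in $[0,1]$.

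Next I would check the partition identity. For $N,M\in\mathbb{N}$ the sum telescopes:
$$\sum_{j=-N}^{M}\varphi(2^{-j}\xi)=\sum_{j=-N}^{M}\bigl(\chi(2^{-j-1}\xi)-\chi(2^{-j}\xi)\bigr)=\chi(2^{-M-1}\xi)-\chi(2^{N}\xi).$$
Fix $\xi\ne0$. Letting $M\to\infty$ gives $\chi(2^{-M-1}\xi)\to\chi(0)=1$; letting $N\to\infty$ gives $\chi(2^{N}\xi)\to0$ because $2^{N}|\xi|\to\infty$ eventually leaves $\mathrm{Supp}\,\chi$; and only finitely many $j$ contribute, since $\varphi(2^{-j}\xi)\ne0$ forces $2^{-j}|\xi|\in[3/4,8/3]$. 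Hence $\sum_{j\in\mathbb{Z}}\varphi(2^{-j}\xi)=1$ for all $\xi\ne0$. For the support separation, if $\varphi(2^{-j}\xi)\ne0$ and $\varphi(2^{-j'}\xi)\ne0$ with $\xi\ne0$, then $2^{-j}|\xi|$ and $2^{-j'}|\xi|$ both lie in $[3/4,8/3]$, so $2^{j'-j}\in[9/32,32/9]\subset(1/4,4)$, which forces $|j-j'|\le1$; equivalently $|j-j'|\ge2$ implies $\mathrm{Supp}\,\varphi(2^{-j}\cdot)\cap\mathrm{Supp}\,\varphi(2^{-j'}\cdot)=\emptyset$.

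Finally, the squared-sum bounds follow from the two properties just established. Since $0\le\varphi\le1$ we have $\varphi^2\le\varphi$, so $\sum_{j}\varphi^2(2^{-j}\xi)\le\sum_{j}\varphi(2^{-j}\xi)=1$. For the lower bound, fix $\xi\ne0$: by the separation property at most two (consecutive) indices give nonzero values, say $a:=\varphi(2^{-j_0}\xi)\ge0$ and $b:=\varphi(2^{-j_0-1}\xi)\ge0$ with $a+b=1$; by convexity $a^2+b^2\ge\tfrac12(a+b)^2=\tfrac12$ (this is trivial if only one term is nonzero, in which case it equals $1$). Hence $\tfrac12\le\sum_{j}\varphi^2(2^{-j}\xi)\le1$, which is the last assertion. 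The only delicate point is the bookkeeping of the dyadic radii — one must tune the plateau radius and the support radius of $\chi$ so that $\varphi$ is supported exactly in $\{3/4\le|\xi|\le8/3\}$ and so that consecutive dilates overlap while non-consecutive ones do not; and the choice of $\chi$ \emph{non-increasing along rays}, rather than an arbitrary bump, is precisely what guarantees $\varphi\ge0$, without which the $\varphi^2$ lower bound would fail.
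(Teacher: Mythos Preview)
Your construction is correct and is precisely the classical argument from \cite{Bahouri2011}: define $\varphi$ as a dyadic difference $\chi(\cdot/2)-\chi(\cdot)$ of a radial, non-increasing cut-off, telescope to get the partition of unity, read off the support separation from the annulus radii, and deduce the $\varphi^2$ bounds from $0\le\varphi\le1$ together with the fact that at most two consecutive dilates contribute. The paper itself does not supply a proof of this proposition---it is quoted as a standard preliminary from \cite{Bahouri2011}---so there is nothing in the paper to compare against; your write-up simply fills in what the reference would provide.
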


Let $u$ be a tempered distribution in $\mathcal{S}'_h(\mathbb{R}^2)$. For all $j\in\mathbb{Z}$, define
$$\dot{\Delta}_j u=\mathcal{F}^{-1}(\varphi(2^{-j}\cdot)\mathcal{F}u).$$
Then the Littlewood-Paley decomposition is given as follows:
$$ u=\sum_{j\in\mathbb{Z}}\dot{\Delta}_j u \quad \text{in}\ \mathcal{S}'(\mathbb{R}^2). $$
Let $s\in\mathbb{R},\ 1\leq p,r\leq\infty.$ The homogeneous Besov space $\dot{B}^s_{p,r}$ and $\dot{B}^s_{p,r}(\mathcal{L}^q)$ is defined by
$$ \dot{B}^s_{p,r}=\{u\in \mathcal{S}'_h:\|u\|_{\dot{B}^s_{p,r}}=\Big\|(2^{js}\|\dot{\Delta}_j u\|_{L^p})_j \Big\|_{l^r(\mathbb{Z})}<\infty\}, $$
$$ \dot{B}^s_{p,r}(\mathcal{L}^q)=\{\phi\in \mathcal{S}'_h:\|\phi\|_{\dot{B}^s_{p,r}(\mathcal{L}^q)}=\Big\|(2^{js}\|\dot{\Delta}_j \phi\|_{L_{x}^{p}(\mathcal{L}^q)})_j \Big\|_{l^r(\mathbb{Z})}<\infty\}.$$

We introduce the following results about inclusions between Lesbesgue and Besov spaces
\begin{lemm}\cite{Bahouri2011}\label{Lemma}
Let $1\leq p\leq 2$ and $d=2$. Then we have
$$L^{p}\hookrightarrow \dot{B}^{1-\frac 2 p}_{2,\infty}.$$
\end{lemm}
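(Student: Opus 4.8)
The plan is to read off the embedding from Bernstein's inequality applied to each dyadic block, together with the $j$-uniform boundedness of $\dot{\Delta}_j$ on $L^p$. First I would note that, by Proposition \ref{pro0}, for every $j\in\mathbb{Z}$ the block $\dot{\Delta}_j f$ can be written as $\dot{\Delta}_j f = 2^{2j}h(2^j\cdot)\ast f$ with $h=\mathcal{F}^{-1}\varphi\in\mathcal{S}(\mathbb{R}^2)\subset L^1$. Since $\|2^{2j}h(2^j\cdot)\|_{L^1}=\|h\|_{L^1}$, Young's inequality gives $\|\dot{\Delta}_j f\|_{L^p}\leq\|h\|_{L^1}\|f\|_{L^p}$ for all $1\leq p\leq\infty$, with a constant independent of $j$. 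Moreover, the spectrum of $\dot{\Delta}_j f$ is contained in the annulus $2^j\mathcal{C}$, hence in a ball of radius $\lm 2^j$.

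With this in hand, Bernstein's lemma (see \cite{Bahouri2011}) yields, for $1\leq p\leq 2$ and $d=2$,
$$\|\dot{\Delta}_j f\|_{L^2}\lm 2^{2j\left(\frac1p-\frac12\right)}\|\dot{\Delta}_j f\|_{L^p}\lm 2^{j\left(\frac2p-1\right)}\|f\|_{L^p}=2^{-j\left(1-\frac2p\right)}\|f\|_{L^p},$$
where I used $d\left(\frac1p-\frac12\right)=\frac2p-1$ and the uniform bound of the first step. Multiplying by $2^{j(1-\frac2p)}$ and taking the supremum over $j\in\mathbb{Z}$ gives precisely
$$\|f\|_{\dot{B}^{1-\frac2p}_{2,\infty}}=\sup_{j\in\mathbb{Z}}2^{j\left(1-\frac2p\right)}\|\dot{\Delta}_j f\|_{L^2}\lm\|f\|_{L^p},$$
which is the desired continuous inclusion. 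For $p=2$ the estimate is in any case trivial, since $\dot{B}^0_{2,2}=L^2\hookrightarrow\dot{B}^0_{2,\infty}$, but the computation above covers this case as well.

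It remains to check that any $f\in L^p$ with $1\leq p\leq 2$ indeed belongs to $\mathcal{S}'_h(\mathbb{R}^2)$, so that the homogeneous Littlewood--Paley decomposition and the Besov norm are well defined. For the low-frequency truncation $\dot{S}_j f=\sum_{j'\leq j-1}\dot{\Delta}_{j'}f$, which is the convolution of $f$ with $2^{2j}\widetilde h(2^j\cdot)$ for some $\widetilde h\in\mathcal{S}(\mathbb{R}^2)$, Bernstein again gives $\|\dot{S}_j f\|_{L^\infty}\lm 2^{2j/p}\|f\|_{L^p}\to 0$ as $j\to-\infty$; hence $\dot{S}_j f\to 0$ in $L^\infty$, and a fortiori in $\mathcal{S}'$, which is the defining property of $\mathcal{S}'_h$. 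I do not foresee a genuine obstacle here: the argument is a direct application of Bernstein's inequality, and the only mild points of care are the $j$-uniformity of the constant in $\|\dot{\Delta}_j f\|_{L^p}\lm\|f\|_{L^p}$ (handled by the scaling identity $\dot{\Delta}_j f=2^{2j}h(2^j\cdot)\ast f$) and the $\mathcal{S}'_h$ membership just verified.
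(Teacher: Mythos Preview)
Your argument is correct. The paper itself does not supply a proof of this lemma but simply cites it from \cite{Bahouri2011}; your proposal reproduces the standard Bernstein-based argument found there, namely the uniform $L^p$-boundedness of $\dot{\Delta}_j$ together with the Bernstein inequality $\|\dot{\Delta}_j f\|_{L^2}\lm 2^{j\,d(\frac1p-\frac12)}\|\dot{\Delta}_j f\|_{L^p}$, and you also handle the $\mathcal{S}'_h$ membership correctly.
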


The following lemma is the Gagliardo-Nirenberg inequality of Sobolev type.
\begin{lemm}\cite{1959On}\label{Lemma0}
Let $d\geq2,~p\in[2,+\infty)$ and $0\leq s,s_1\leq s_2$, then there exists a constant $C$ such that
 $$\|\Lambda^{s}f\|_{L^{p}}\leq C \|\Lambda^{s_1}f\|^{1-\theta}_{L^{2}}\|\Lambda^{s_2} f\|^{\theta}_{L^{2}},$$
where $0\leq\theta\leq1$ and $\theta$ satisfy
$$ s+d(\frac 1 2 -\frac 1 p)=s_1 (1-\theta)+\theta s_2.$$
Note that we require that $0<\theta<1$, $0\leq s_1\leq s$, when $p=\infty$.
\end{lemm}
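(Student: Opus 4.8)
The plan is to derive the inequality from the Littlewood--Paley decomposition of Proposition~\ref{pro0}, Bernstein's inequality, and an optimization over a dyadic cut-off. Set $\sigma:=s+d(\frac12-\frac1p)$, so that the balance relation becomes $\sigma=s_1(1-\theta)+\theta s_2$, whence $\sigma-s_1=\theta(s_2-s_1)$ and $s_2-\sigma=(1-\theta)(s_2-s_1)$. One may assume $f\in\mathcal{S}'_h(\mathbb{R}^d)$ with $\Lambda^{s_1}f,\Lambda^{s_2}f\in L^2$, since otherwise the right-hand side is $0$ or $\infty$ and there is nothing to prove. First I would dispose of the degenerate cases: when $\theta\in\{0,1\}$ (allowed only for $p<\infty$) or $s_1=s_2$, the right-hand side reduces to a single factor $\|\Lambda^{s_i}f\|_{L^2}$ with $s_i=\sigma$, and since $\sigma-s=d(\frac12-\frac1p)$ the claim is then exactly the homogeneous Sobolev embedding $\dot H^{d(1/2-1/p)}(\mathbb{R}^d)\hookrightarrow L^p$ ($2\le p<\infty$) applied to $\Lambda^{s}f$. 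So from now on I assume $0<\theta<1$ and $s_1<s_2$, in particular $\sigma-s_1>0>\sigma-s_2$.

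Next, for each $j\in\mathbb{Z}$, Bernstein's inequality (applicable because $p\ge2$) together with the standard equivalence $\|\Lambda^{\alpha}\dot\Delta_j h\|_{L^r}\sim 2^{j\alpha}\|\dot\Delta_j h\|_{L^r}$ (valid since $|\xi|\sim 2^j$ on $\mathrm{Supp}\,\varphi(2^{-j}\cdot)$) gives, for $i=1,2$,
\begin{align*}
\|\Lambda^s\dot\Delta_j f\|_{L^p}\lm 2^{jd(\frac12-\frac1p)}\|\Lambda^s\dot\Delta_j f\|_{L^2}\lm 2^{j\sigma}\|\dot\Delta_j f\|_{L^2}\lm 2^{j(\sigma-s_i)}\|\Lambda^{s_i}\dot\Delta_j f\|_{L^2}.
\end{align*}
Fixing $N\in\mathbb{Z}$, I would use $\|\Lambda^s f\|_{L^p}\le\sum_{j}\|\Lambda^s\dot\Delta_j f\|_{L^p}$, apply the bound with $i=1$ for $j\le N$ and with $i=2$ for $j>N$, and then Cauchy--Schwarz in $j$ together with the almost-orthogonality estimate $\sum_j\|\dot\Delta_j h\|_{L^2}^2\lm\|h\|_{L^2}^2$ (a consequence of $\sum_j\varphi^2(2^{-j}\cdot)\le1$ in Proposition~\ref{pro0} and Plancherel). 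Because $\sigma-s_1=\theta(s_2-s_1)>0$ and $\sigma-s_2=-(1-\theta)(s_2-s_1)<0$, the two geometric series converge, yielding
\begin{align*}
\|\Lambda^s f\|_{L^p}\lm 2^{N\theta(s_2-s_1)}\|\Lambda^{s_1}f\|_{L^2}+2^{-N(1-\theta)(s_2-s_1)}\|\Lambda^{s_2}f\|_{L^2}.
\end{align*}

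Finally I would optimize in $N$: taking $N$ to be the nearest integer to $\frac{1}{s_2-s_1}\log_2\!\big(\|\Lambda^{s_2}f\|_{L^2}/\|\Lambda^{s_1}f\|_{L^2}\big)$ makes both terms comparable to $\|\Lambda^{s_1}f\|_{L^2}^{1-\theta}\|\Lambda^{s_2}f\|_{L^2}^{\theta}$, the integer rounding only affecting the constant $C$; this is the asserted estimate. The point requiring the most care is the convergence of the two geometric series, i.e. the strict inequalities $\sigma-s_1>0$ and $s_2-\sigma>0$; this is precisely why $0<\theta<1$ and $s_1<s_2$ are needed in the non-trivial regime, and why the $p=\infty$ endpoint must exclude $\theta\in\{0,1\}$ (there $\dot H^{d/2}\hookrightarrow L^\infty$ fails, so the degenerate-case argument above breaks down). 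The case $p=\infty$ is otherwise identical, with Bernstein's inequality read as $\|\Lambda^s\dot\Delta_j f\|_{L^\infty}\lm 2^{j(s+d/2)}\|\dot\Delta_j f\|_{L^2}$; the extra hypothesis $0\le s_1\le s$ then guarantees $\sigma-s_1=s+\tfrac d2-s_1>0$ automatically.
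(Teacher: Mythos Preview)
Your argument is correct. Note, however, that the paper does not supply its own proof of this lemma: it is stated as a preliminary result and attributed to the classical reference \cite{1959On} (Nirenberg, 1959), where the original proof proceeds by real-variable interpolation and integral estimates rather than by dyadic decomposition.

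Your route via the Littlewood--Paley blocks of Proposition~\ref{pro0}, Bernstein, and optimization over a cut-off index $N$ is therefore a genuinely different (and, in the context of this paper, more natural) derivation. What it buys: the proof is self-contained within the Besov framework already set up in Section~2, and it makes completely transparent \emph{why} the restriction $0<\theta<1$ is needed at $p=\infty$---namely, the failure of the endpoint embedding $\dot H^{d/2}\hookrightarrow L^\infty$ shows up exactly as the non-summability of the low-frequency geometric series when $\sigma=s_1$. The classical approach, by contrast, handles all Lebesgue exponents $1\le q\le p\le\infty$ at once (not just $q=2$) and does not rely on Fourier localisation, at the cost of a longer calculus argument. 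One minor remark: your closing sentence about the extra hypothesis $s_1\le s$ guaranteeing $\sigma-s_1>0$ is a little redundant, since in the non-degenerate regime $0<\theta<1$, $s_1<s_2$ already forces $s_1<\sigma<s_2$; the point of that hypothesis in the lemma's statement is rather to ensure that a $\theta\in(0,1)$ satisfying the balance relation actually exists.
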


The following lemma allows us to estimate the extra stress tensor $\tau$.
\begin{lemm}\cite{Masmoudi2008}\label{Lemma1}
 If $\int_B g\psi_\infty dR=0$, then there exists a constant $C$ such that
 $$\|g\|_{\mathcal{L}^{2}}\leq C \|\nabla _{R} g\|_{\mathcal{L}^{2}}.$$
\end{lemm}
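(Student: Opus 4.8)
The statement is a weighted Poincaré (spectral gap) inequality for the probability measure $d\mu=\psi_\infty\,dR$ on $B=B(0,1)$, where $\psi_\infty(R)=c_k(1-|R|^2)^k$ degenerates at $\partial B$. Since $\int_B g\psi_\infty\,dR=0$ says precisely that $g$ is $\mathcal L^2$-orthogonal to the constants, the orthogonal projection of $g$ onto constants vanishes, so $\|g\|_{\mathcal L^2}=\min_{c\in\mathbb R}\|g-c\|_{\mathcal L^2}\le\|g-m\|_{\mathcal L^2}$ for \emph{any} constant $m$; I take $m$ to be the (unweighted) average of $g$ over the half-ball $B_{1/2}=B(0,1/2)$. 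It thus suffices to bound $\int_B\psi_\infty|g-m|^2\,dR$ by $C\int_B\psi_\infty|\nabla_R g|^2\,dR$ (we may assume the right-hand side finite), and I split the integral over $B_{1/2}$ and the annulus $A=\{1/2<|R|<1\}$. On $B_{1/2}$ the weight is smooth and bounded above and below by positive constants, so the classical Poincaré inequality on $B_{1/2}$ gives $\int_{B_{1/2}}\psi_\infty|g-m|^2\,dR\lm\int_{B_{1/2}}|g-m|^2\,dR\lm\int_{B_{1/2}}|\nabla_R g|^2\,dR\lm\int_{B_{1/2}}\psi_\infty|\nabla_R g|^2\,dR$.

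For the annulus I pass to polar coordinates $R=r\omega$, $r\in(1/2,1)$, $\omega$ in the unit sphere, note that $\psi_\infty\simeq(1-r)^k$ on $A$, and write $g(r\omega)-g(\tfrac12\omega)=\int_{1/2}^r\partial_\rho g(\rho\omega)\,d\rho$. The decisive ingredient is the one-dimensional weighted Hardy inequality of Muckenhoupt type on the interval $(1/2,1)$ with weight $(1-r)^k$ on both sides: its validity is equivalent to
\[
\sup_{1/2<t<1}\ \Big(\int_t^1(1-r)^k\,dr\Big)\Big(\int_{1/2}^t(1-\rho)^{-k}\,d\rho\Big)<\infty ,
\]
and a direct computation shows this product is $O((1-t)^2)$ when $k\ne 1$ and $O\big((1-t)^2\log\tfrac1{1-t}\big)$ when $k=1$, and stays bounded on all of $(1/2,1)$, \emph{for every} $k>0$. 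Hence there is $C_k$ with $\int_{1/2}^1(1-r)^k\big|\int_{1/2}^r h(\rho)\,d\rho\big|^2\,dr\le C_k\int_{1/2}^1(1-\rho)^k|h(\rho)|^2\,d\rho$ for all $h$. Applying this with $h(\rho)=\partial_\rho g(\rho\omega)$, integrating over $\omega$ (the Jacobians $r^{d-1},\rho^{d-1}$ are bounded above and below on $[1/2,1]$), and using $|\partial_r g|\le|\nabla_R g|$, I get $\int_A\psi_\infty\big|g(R)-g(\tfrac{R}{2|R|})\big|^2\,dR\lm\int_A\psi_\infty|\nabla_R g|^2\,dR$.

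It remains to compare the spherical values $g(\tfrac12\omega)$ with $m$: by the trace theorem on $B_{1/2}$ and Poincaré, $\int_{|\omega|=1}|g(\tfrac12\omega)-m|^2\,d\omega\lm\|g-m\|_{H^1(B_{1/2})}^2\lm\int_{B_{1/2}}|\nabla_R g|^2\,dR\lm\int_{B_{1/2}}\psi_\infty|\nabla_R g|^2\,dR$, and since $g(\tfrac12\omega)$ depends only on $\omega$, $\int_A\psi_\infty|g(\tfrac12\omega)-m|^2\,dR=\big(\int_{1/2}^1c_k(1-r^2)^k r^{d-1}dr\big)\int_{|\omega|=1}|g(\tfrac12\omega)-m|^2\,d\omega$ with a finite radial factor. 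Combining the three contributions yields $\|g\|_{\mathcal L^2}^2\le\|g-m\|_{\mathcal L^2}^2\lm\int_B\psi_\infty|\nabla_R g|^2\,dR=C\|\nabla_R g\|_{\mathcal L^2}^2$. A routine technical point is that the fundamental theorem of calculus in $\rho$ and the trace estimate require some regularity of $g$; this is handled by proving the bound first for smooth $g$ and then approximating a general $g$ with $\nabla_R g\in\mathcal L^2$ in the weighted graph norm. The one genuine obstacle is the degeneracy of $\psi_\infty$ at $\partial B$, and it is absorbed precisely by the Muckenhoupt weight condition above; alternatively one could run a contradiction–compactness argument, whose main input would be the compact embedding of $\{g:\nabla_R g\in\mathcal L^2\}$ into $\mathcal L^2$, which reduces to the same near-boundary Hardy estimate together with interior Rellich compactness.
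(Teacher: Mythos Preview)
The paper does not prove this lemma; it is stated with a bare citation to \cite{Masmoudi2008}, where the weighted Poincar\'e inequality for the FENE measure $\psi_\infty=c_k(1-|R|^2)^k$ is established. Your proposal therefore supplies what the paper omits: a self-contained proof via the Muckenhoupt characterization of the one-dimensional weighted Hardy inequality in the radial variable, glued to the classical Poincar\'e and trace estimates on the inner ball $B_{1/2}$, where the weight is nondegenerate. The strategy is sound and the decisive computation --- finiteness of $\sup_{1/2<t<1}\big(\int_t^1(1-r)^k\,dr\big)\big(\int_{1/2}^t(1-\rho)^{-k}\,d\rho\big)$ --- is correct for every $k>0$. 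One cosmetic inaccuracy: for $0<k<1$ the product is $O\big((1-t)^{k+1}\big)$ rather than $O\big((1-t)^2\big)$ as $t\to1$, since in that range $\int_{1/2}^t(1-\rho)^{-k}\,d\rho$ tends to a finite limit; this does not affect the boundedness conclusion. The density/approximation point you flag is genuine but routine, and the contradiction--compactness alternative you sketch is indeed another standard route in the literature.
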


\begin{lemm}\label{Lemma2}
\cite{Masmoudi2008} For all $\delta>0$, there exists a constant $C_{\delta}$ such that
$$|\tau(g)|^2\leq\delta\|\nabla _{R}g\|^2_{\mathcal{L}^{2}}
+C_{\delta}\|g\|^2_{\mathcal{L}^{2}}.$$  \\
If $(p-1)k>1$, then
$$|\tau(g)|\leq C\|g\|_{\mathcal{L}^{p}}.$$
\end{lemm}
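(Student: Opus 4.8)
\emph{Proof strategy.} The plan is to reduce both bounds to the explicit form of the FENE potential. Since $\mathcal{U}(R)=-k\log(1-|R|^2)$ (recall $R_0=1$), we have $\nabla_{Rj}\mathcal{U}=\frac{2kR_j}{1-|R|^2}$, so that
$$\tau_{ij}(g)=\int_{B}(R_i\nabla_{Rj}\mathcal{U})\,g\,\psi_\infty\,dR=2k\int_{B}\frac{R_iR_j}{1-|R|^2}\,g\,\psi_\infty\,dR,$$
and, since $|R_iR_j|\le|R|^2\le1$ on $B=B(0,1)$, we get the master pointwise (in $x$) bound
$$|\tau_{ij}(g)|\le 2k\int_{B}\frac{|g|}{1-|R|^2}\,\psi_\infty\,dR=\frac{2k}{Z}\int_{B}|g|\,(1-|R|^2)^{k-1}\,dR,\qquad Z:=\int_{B}(1-|R|^2)^k\,dR.$$
Everything hinges on the fact that the singularity of $(1-|R|^2)^{-1}$ at $\partial B$ is tamed by $\psi_\infty\sim(1-|R|^2)^k$, which vanishes there because $k>0$; both inequalities of the lemma follow by estimating this right-hand side.

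For the second inequality I would simply apply H\"older's inequality with exponents $p$ and $p'=p/(p-1)$:
$$|\tau_{ij}(g)|\le 2k\,\|g\|_{\mathcal{L}^{p}}\Big(\int_{B}\frac{\psi_\infty}{(1-|R|^2)^{p'}}\,dR\Big)^{1/p'}=2k\,\|g\|_{\mathcal{L}^{p}}\Big(\frac1Z\int_{B}(1-|R|^2)^{k-p'}\,dR\Big)^{1/p'}.$$
The remaining integral is finite precisely when $k-p'>-1$, i.e.\ $(p-1)k>1$, which is exactly the hypothesis; summing over $i,j$ gives $|\tau(g)|\le C\|g\|_{\mathcal{L}^{p}}$ with $C$ depending only on $k$ and $p$.

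For the first inequality the extra ingredient is a weighted Hardy-type estimate that trades one power of $(1-|R|^2)^{-1}$ for a $\nabla_R$-derivative. I would start from the elementary identity
$$2k\,(1-|R|^2)^{k-1}=(d+2k)(1-|R|^2)^{k}-\nabla_R\!\cdot\!\big((1-|R|^2)^{k}R\big),$$
valid in any dimension $d$; multiplying by $|g|^2$, integrating over $B$, and integrating by parts (the field $(1-|R|^2)^{k}R$ vanishes on $\partial B$, so no boundary term appears), one gets after using $|R|\le1$ and Cauchy--Schwarz
$$\int_{B}\frac{|g|^2}{1-|R|^2}\,\psi_\infty\,dR=\frac1Z\int_{B}|g|^2(1-|R|^2)^{k-1}\,dR\le C_0\big(\|g\|^2_{\mathcal{L}^2}+\|\nabla_R g\|^2_{\mathcal{L}^2}\big).$$
To produce the arbitrarily small constant $\delta$, I would fix $\eta\in(0,1)$ and split the master bound for $|\tau_{ij}(g)|$ over $\{|R|\le 1-\eta\}$ and $\{1-\eta<|R|<1\}$. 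On the inner region $(1-|R|^2)^{-1}\le(2\eta-\eta^2)^{-1}$, so that contribution is $\le C_\eta\|g\|_{\mathcal{L}^1}\le C_\eta\|g\|_{\mathcal{L}^2}$. On the boundary layer I apply Cauchy--Schwarz pulling out the small mass $\omega(\eta):=\frac1Z\int_{1-\eta<|R|<1}(1-|R|^2)^{k-1}\,dR$, which tends to $0$ as $\eta\to0$ since $k>0$, and then the Hardy estimate above, bounding it by $2k\sqrt{C_0\,\omega(\eta)}\,(\|g\|_{\mathcal{L}^2}+\|\nabla_R g\|_{\mathcal{L}^2})$. Squaring, summing over $i,j$, and choosing $\eta$ small enough that the resulting coefficient of $\|\nabla_R g\|^2_{\mathcal{L}^2}$ is at most $\delta$ yields $|\tau(g)|^2\le\delta\|\nabla_R g\|^2_{\mathcal{L}^2}+C_\delta\|g\|^2_{\mathcal{L}^2}$.

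The routine parts are the two H\"older/Cauchy--Schwarz estimates and the evaluation of the $(1-|R|^2)$-power integrals; the only steps requiring genuine care are the integration by parts in the weighted Hardy estimate and the bookkeeping that converts the vanishing boundary-layer mass $\omega(\eta)$ into the small parameter $\delta$. Alternatively, since this lemma is quoted from \cite{Masmoudi2008}, one may simply invoke the corresponding estimates proved there.
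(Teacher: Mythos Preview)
The paper does not prove this lemma at all; it is stated with a citation to \cite{Masmoudi2008} and used as a black box. Your proposal therefore supplies strictly more than the paper does, and the argument you sketch is correct: the second inequality is exactly H\"older against the measure $\psi_\infty\,dR$, with the integrability condition $k-p'>-1$ reducing to $(p-1)k>1$; the first inequality follows from the weighted Hardy-type bound (your divergence identity is correct and the boundary term vanishes since $k>0$) combined with the inner/outer splitting that converts the vanishing annular mass $\omega(\eta)$ into the small constant $\delta$. This is essentially the argument in Masmoudi's original paper, so your closing remark that one may simply invoke \cite{Masmoudi2008} matches what the present paper actually does.
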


Denote the energy and energy dissipation functionals for $(\rho,u,g)$ as follows:
$$E(t)=\|\rho\|^2_{H^{s}}+\|u\|^2_{H^{s}}+\|g\|^2_{H^{s}(\mathcal{L}^{2})},$$
and
$$D(t)=\|\nabla\rho\|^2_{H^{s-1}}+\mu\|\nabla u\|^2_{H^{s}}+(\mu+\mu')\|divu\|^2_{H^{s}}+\|\nabla_R g\|^2_{H^{s}(\mathcal{L}^{2})}.$$

To study the $L^2$ decay rate, we recall the following global existence of strong solutions for \eqref{eq1}.
\begin{theo}\cite{2021Luo}\label{th1}
Let $d=2~and~s>2$. Let $(\rho,u,g)$ be a strong solution of \eqref{eq1} with the initial data $(\rho_0,u_0,g_0)$ satisfying the conditions $\int_B g_{0}\psi_{\infty}dR=0$ and $1+g_0>0$. Then, there exists some sufficiently small constant $\epsilon_0$ such that if
\begin{align}
E(0)=\|\rho_0\|^2_{H^s}+\|u_0\|^2_{H^s}+\|g_0\|^2_{H^s(\mathcal{L}^2)}\leq \epsilon_0,
\end{align}
then \eqref{eq1} admits a unique global strong solution $(\rho,u,g)$ with $\int_B g\psi_{\infty}dR=0$ and $1+g>0$, and we have
\begin{align}
\sup_{t\in[0,+\infty)} E(t)+\int_{0}^{\infty}D(t)dt\leq C_0 E(0),
\end{align}
where $C_0>1$ is a constant.
\end{theo}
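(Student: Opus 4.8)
The plan is to run a bootstrap on top of the global solution furnished by Theorem \ref{th1}, which I use as a black box: throughout, $\sup_{t\ge0}E(t)\le C_0\epsilon_0$ and $\int_0^\infty D(t)\,dt\le C_0\epsilon_0$, so the smallness of $\epsilon_0$ lets me absorb every genuinely nonlinear (cubic, or $H^s$-weighted) contribution. The engine is the Fourier splitting method of Schonbek; the point special to $d=2$ is that the missing $L^1$-information on $u$ must be replaced by a chain of negative-index Besov bounds obtained by Littlewood--Paley and Duhamel arguments. The decay rate for $(\rho,u)$ will be improved step by step,
$$\ln^{-l}(e+t)\ \longrightarrow\ (1+t)^{-1/4}\ \longrightarrow\ (1+t)^{-5/16}\ \longrightarrow\ (1+t)^{-1/2},$$
each arrow coming either from a time-weighted energy estimate fed with the previous rate, or from a Fourier splitting estimate fed with a freshly propagated Besov bound, and once $(\rho,u)$ reaches its optimal rate, $g$ will be treated separately.

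\noindent\textbf{Energy inequality, cancellation, and the first two rates.} Testing the first two equations of \eqref{eq1} against $(\rho,u)$ and the $g$-equation against $g\,\psi_\infty$, and adding a small multiple of $\frac{d}{dt}\int\nabla\rho\cdot u\,dx$ to manufacture dissipation for $\nabla\rho$, one reaches
$$\frac{d}{dt}\mathcal E_0(t)+c\big(\|\nabla\rho\|_{L^2}^2+\|\nabla u\|_{L^2}^2+\|\nabla_R g\|_{L^2(\mathcal L^2)}^2\big)\le C\sqrt{E(t)}\,D(t),$$
where $\mathcal E_0\simeq\|\rho\|_{L^2}^2+\|u\|_{L^2}^2+\|g\|_{L^2(\mathcal L^2)}^2$ and the right-hand side is integrable in time. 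The two linear coupling terms, $\frac1{1+\rho}\operatorname{div}\tau$ in the momentum equation and $\operatorname{div}u+\nabla uR\nabla_R\mathcal U$ in the $g$-equation, are kept together: their principal parts cancel because $\int\nabla u:\tau(g)\,dx=\int\!\!\int(\nabla uR\cdot\nabla_R\mathcal U)\,g\,\psi_\infty\,dR\,dx$, and $\int\operatorname{div}u\,\big(\int_B g\,\psi_\infty\,dR\big)\,dx=0$ by the constraint $\int_B g\,\psi_\infty\,dR=0$; moreover Lemma \ref{Lemma1} gives $\|\nabla_R g\|_{L^2(\mathcal L^2)}^2\ge c\|g\|_{L^2(\mathcal L^2)}^2$, so the $g$-component is already exponentially damped and asymptotically slaved to $\nabla u$. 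Splitting the dissipation over $\{|\xi|\ge R(t)\}$ with $R(t)^2\simeq\ln(e+t)/(1+t)$ and controlling the low-frequency remainder through the integral form of \eqref{eq1} and $\int_0^\infty D<\infty$, exactly as in \cite{Luo-Yin}, yields $\|(\rho,u)\|_{L^2}+\|g\|_{L^2(\mathcal L^2)}\le C_l\ln^{-l}(e+t)$ for every $l\in\mathbb{N}$. Feeding this into the same inequality multiplied by $(1+t)^m$ closes a Gronwall argument giving $\mathcal E_0(t)\lesssim(1+t)^{-1/2}$, i.e.\ $\|\rho\|_{L^2}+\|u\|_{L^2}\lesssim(1+t)^{-1/4}$, and integrating the weighted dissipation gives $\|\nabla u\|_{L^2}\lesssim(1+t)^{-3/4}$.

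\noindent\textbf{Besov bootstrap and the optimal rate for $(\rho,u)$.} Since \eqref{eq1} gives no $L^1$ bound on $u$, I propagate negative Besov norms instead. Localizing \eqref{eq1} with $\dot{\Delta}_j$, using the parabolic/damping structure of the linearized system, Duhamel's formula, the heat-type smoothing $\|e^{(t-s)\Delta}f\|_{\dot{B}^{-1/2}_{2,\infty}}\lesssim(t-s)^{-1/4}\|f\|_{L^1}$ of its semigroup, and the embedding $L^1\hookrightarrow\dot{B}^{-1}_{2,\infty}$ of Lemma \ref{Lemma}, the nonlinear source obeys $\|u\cdot\nabla u\|_{L^1}+\|\rho\operatorname{div}u\|_{L^1}+\cdots\lesssim(1+s)^{-1}$ by the previous step (with $\tau(g)$ estimated through Lemma \ref{Lemma2}), so $\|(\rho,u)\|_{L^\infty_t\dot{B}^{-1/2}_{2,\infty}}+\|g\|_{L^\infty_t\dot{B}^{-1/2}_{2,\infty}(\mathcal L^2)}\le C$. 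This produces $\int_{|\xi|\le R(t)}(|\hat\rho|^2+|\hat u|^2)\,d\xi\lesssim R(t)$, which inserted into the Fourier splitting estimate improves the rate to $\|\rho\|_{L^2}+\|u\|_{L^2}\lesssim(1+t)^{-5/16}$ and $\|\nabla u\|_{L^2}\lesssim(1+t)^{-13/16}$. Because $2\cdot\tfrac5{16}+\tfrac12=\tfrac98>1$, the source terms are now $L^1$ in time, so repeating the Duhamel argument — this time with no time smoothing needed — propagates the stronger bound $\|(\rho,u)\|_{L^\infty_t\dot{B}^{-1}_{2,\infty}}+\|g\|_{L^\infty_t\dot{B}^{-1}_{2,\infty}(\mathcal L^2)}\le C$. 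Then $\int_{|\xi|\le R(t)}(|\hat\rho|^2+|\hat u|^2)\,d\xi\lesssim R(t)^2\simeq(1+t)^{-1}$, and the standard Fourier splitting estimate with $R(t)^2\simeq1/(1+t)$ closes $\mathcal E_0(t)\lesssim(1+t)^{-1}$, which is \eqref{decay0}.

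\noindent\textbf{Rate for $g$, and the main obstacle.} With $\|u\|_{L^2}\lesssim(1+t)^{-1/2}$ in hand, a time-weighted energy estimate upgrades the dissipation to $\|\nabla u\|_{L^2}\lesssim(1+t)^{-1}$. Returning to the $L^2$-estimate of the $g$-equation and using $\int_B g\,\psi_\infty\,dR=0$ together with Lemmas \ref{Lemma1} and \ref{Lemma2} to absorb the small transport and drag contributions gives $\frac{d}{dt}\|g\|_{L^2(\mathcal L^2)}^2+c\|g\|_{L^2(\mathcal L^2)}^2\lesssim\|\nabla u\|_{L^2}^2$, and Gronwall with an exponential kernel against $(1+s)^{-2}$ yields $\|g\|_{L^2(\mathcal L^2)}\lesssim(1+t)^{-1}$. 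The two Remarks then follow at once: with $\psi\equiv0$, \eqref{eq1} reduces to the CNS system for which \cite{Xu2019} shows the rate is optimal, and since only low-frequency bounds on the data are ever used, the hypothesis may be localized as stated. The hard part of the whole argument is the bootstrap just described: neither the optimal rate nor even the $\dot{B}^{-1}_{2,\infty}$ propagation is reachable in one shot, because propagating $\dot{B}^{-1}_{2,\infty}$ already requires decay strictly faster than $(1+t)^{-1/4}$ while the energy estimates deliver exactly $(1+t)^{-1/4}$; the chain $(1+t)^{-1/4}\to\dot{B}^{-1/2}_{2,\infty}\to(1+t)^{-5/16}\to\dot{B}^{-1}_{2,\infty}$ is designed to break this circularity, and every link is borderline. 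One must also carry the linear coupling $\operatorname{div}\tau$ — which sits at the same order as $\nabla\rho$ and cannot be placed in $L^1_x$ on its own — through every frequency-localized estimate, by keeping it paired with the $g$-equation and using the cancellation above together with the spectral gap of $\mathcal L$.
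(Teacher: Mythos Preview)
Your proposal does not address the statement you were asked to prove. Theorem~\ref{th1} is the global well-posedness result: existence and uniqueness of a global strong solution near equilibrium, propagation of the constraints $\int_B g\psi_\infty\,dR=0$ and $1+g>0$, and the closed a priori bound $\sup_{t\ge0}E(t)+\int_0^\infty D(t)\,dt\le C_0E(0)$. The paper does not prove this theorem at all; it is quoted from \cite{2021Luo} and used as a standing hypothesis for the decay analysis in Section~3. A proof would require a local existence theory in $H^s\times H^s\times H^s(\mathcal L^2)$, the full $H^s$ energy estimate with commutators controlled by $\sqrt{E(t)}\,D(t)$, and a continuation/bootstrap argument to make the smallness of $E(0)$ close the estimate globally.

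What you have written is instead a sketch of Theorem~\ref{th2}, the decay theorem; you say so yourself in the first line (``the global solution furnished by Theorem~\ref{th1}, which I use as a black box''). As a sketch of Theorem~\ref{th2} it is in fact quite close to what the paper does in Section~3: the same Fourier-side cancellation between $\operatorname{div}\tau$ and the forcing of the $g$-equation, the same chain $\ln^{-l}\to(1+t)^{-1/4}\to\dot B^{-1/2}_{2,\infty}\to(1+t)^{-5/16}\to\dot B^{-1}_{2,\infty}\to(1+t)^{-1/2}$, and the same final treatment of $g$ via the spectral gap of $\mathcal L$. The only methodological difference is that the paper propagates the negative Besov norms by a direct energy estimate on $\dot\Delta_j(\rho,u,g)$ (Lemma~\ref{prop4}) rather than by Duhamel and semigroup smoothing as you suggest; both routes work here. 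But none of this is relevant to Theorem~\ref{th1}: you have proved the wrong statement.
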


\section{The $L^2$ decay rate}
This section is devoted to investigating the long time behaviour for the compressible FENE dumbbell model with $d=2$. Since the additional stress tensor $\tau$ does not decay fast enough, we failed to use the bootstrap argument as in \cite{Schonbek1985,Luo-Yin2}. To deal with this term, we consider the coupling effect between $\rho$, $u$ and $g$. Motivated by \cite{He2009}, \cite{Luo-Yin} and \cite{2018Global}, we obtain the initial $L^2$ decay rate by taking Fourier transform in \eqref{eq1} and using the Fourier splitting method in following Proposition.
\begin{prop}\label{prop1}
Let $d=2$. Under the condition in Theorem \ref{th2}, for any $l\in N^{+}$, then there exists a constant $C$ such that
\begin{align}\label{decay1}
E_\eta(t)\leq C\ln^{-l}(e+t)
\end{align}
\end{prop}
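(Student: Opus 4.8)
The plan is to run a Fourier splitting argument on the energy functional $E_\eta(t)$ (a weighted version of $E(t)$, presumably with the $\rho$-part and $u$-part combined via a small cross term to absorb the pressure coupling), and to iterate it to get logarithmic gain. First I would derive the basic energy identity: differentiating $E_\eta(t)$ in time, the good viscous dissipation $\mu\|\nabla u\|^2+(\mu+\mu')\|\mathrm{div}\, u\|^2$ and the Fokker--Planck dissipation $\|\nabla_R g\|^2_{\mathcal{L}^2}$ appear with favorable sign, while the linear coupling term $\mathrm{div}\,\tau$ in the $u$-equation and the source terms $-\mathrm{div}\, u-\nabla u R\nabla_R\mathcal U$ in the $g$-equation must be handled by the cancellation observed in \cite{He2009,2018Global}: in Fourier space these two linear interactions cancel (up to the part controlled by $\|\nabla_R g\|_{\mathcal{L}^2}$ via Lemma \ref{Lemma1} and Lemma \ref{Lemma2}), so that no net loss occurs. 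The nonlinear terms $u\cdot\nabla\rho$, $u\cdot\nabla u$, the terms with $\rho/(1+\rho)$, and $\frac1{\psi_\infty}\nabla_R\cdot(\nabla u R g\psi_\infty)$ are all at least quadratic and, using $\sup_t E(t)\le C_0E(0)\le C_0\epsilon_0$ from Theorem \ref{th1} together with Lemma \ref{Lemma0}, can be absorbed into the dissipation with a small constant. This yields
\begin{align}\label{eq:basic-energy}
\frac{d}{dt}E_\eta(t)+c\,D(t)\le 0
\end{align}
for some $c>0$ (possibly after using that $\|g\|^2_{\mathcal{L}^2}\lesssim\|\nabla_R g\|^2_{\mathcal{L}^2}$, so the $g$-dissipation also controls $\|g\|^2_{\mathcal{L}^2}$).

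Next I would apply the Fourier splitting method. Let $S(t)=\{\xi:|\xi|^2\le \frac{K}{e+t}\}$ for a suitable constant $K$. Splitting $D(t)\gtrsim \int_{S(t)^c}|\xi|^2(|\widehat\rho|^2+|\widehat u|^2+\dots)\,d\xi$ and bounding it below by $\frac{K}{e+t}$ times the corresponding $L^2$ masses outside $S(t)$, inequality \eqref{eq:basic-energy} becomes, schematically,
\begin{align}\label{eq:split}
\frac{d}{dt}\big((e+t)^{K'}E_\eta(t)\big)\lesssim (e+t)^{K'-1}\int_{S(t)}(|\widehat\rho|^2+|\widehat u|^2+|\widehat g|^2_{\mathcal{L}^2})\,d\xi,
\end{align}
after choosing $K$ so that $K'=cK$ is large. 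The heart of the matter is then to estimate the low-frequency integral on the right. This is exactly where the difficulty lies: there is no $L^1$ control on $u$ (as the authors stress), so one cannot bound $|\widehat u(\xi)|$ by a constant on low frequencies directly. Instead, following \cite{Luo-Yin}, I would estimate $\widehat\rho,\widehat u,\widehat g$ on $S(t)$ using Duhamel's formula from the linear part of \eqref{eq1}: the linear semigroup gives decay, and the nonlinear/stress contributions are integrated in time against the already-known (crude) bounds. Feeding in a provisional bound $E_\eta(t)\lesssim \ln^{-(l-1)}(e+t)$ and estimating $\int_{S(t)}|\widehat u|^2\,d\xi$ via Plancherel plus the smallness of $|S(t)|\sim (e+t)^{-1}$, one closes a bootstrap: the right side of \eqref{eq:split} integrates to give $(e+t)^{K'}\ln^{-l}(e+t)$, hence $E_\eta(t)\lesssim \ln^{-l}(e+t)$, improving the exponent by one.

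The induction is seeded at $l=0$ (the uniform bound $E_\eta(t)\le C_0E(0)$ from Theorem \ref{th1}), and each step gains one power of $\ln^{-1}$. The main obstacle, as indicated, is controlling $\int_{S(t)}(|\widehat\rho|^2+|\widehat u|^2+|\widehat g|^2_{\mathcal{L}^2})\,d\xi$ without an $L^1$ assumption: the key idea is that the stress tensor $\tau$, though slowly decaying, enters the $u$-equation in a form that, after the Fourier-space cancellation against the $g$-source terms, contributes only through $\int_0^t\|\nabla_R g\|^2_{\mathcal{L}^2}\,ds<\infty$, which is integrable by Theorem \ref{th1}. Managing this cancellation carefully at low frequencies — keeping track of the extra factor $|\xi|$ it produces — is the technical core; once that is in place, the Fourier splitting iteration is routine and delivers \eqref{decay1} for every $l\in\mathbb{N}^+$.
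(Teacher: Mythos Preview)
Your overall strategy (energy inequality $\Rightarrow$ Fourier splitting $\Rightarrow$ low-frequency estimate via the Fourier ODE with the $\tau$--$g$ cancellation $\Rightarrow$ bootstrap) matches the paper, but the concrete implementation you propose has a genuine gap at the splitting step, and as written the bootstrap does not close.

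You take the splitting ball $S(t)=\{|\xi|^2\le K/(e+t)\}$ and the polynomial weight $(e+t)^{K'}$. With this choice, feeding in $E_\eta\lesssim\ln^{-(l-1)}(e+t)$ and using Plancherel gives only $\int_{S(t)}|\hat u|^2\,d\xi\le\|u\|_{L^2}^2\lesssim\ln^{-(l-1)}(e+t)$; the factor $|S(t)|\sim(e+t)^{-1}$ cannot help unless you have a pointwise bound on $\hat u$, which is precisely the missing $L^1$ information. Then $\int_0^t (e+s)^{K'-1}\ln^{-(l-1)}(e+s)\,ds\sim (e+t)^{K'}\ln^{-(l-1)}(e+t)$, so you recover only $E_\eta\lesssim\ln^{-(l-1)}(e+t)$: no improvement. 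More fundamentally, if instead of Plancherel you estimate the low-frequency mass by integrating the Fourier ODE (as you should), the worst nonlinear contribution is
\[
\int_{S(t)}\int_0^t|\hat G\cdot\bar{\hat u}|\,ds\,d\xi\lesssim |S(t)|^{1/2}\int_0^t(\|u\|_{L^2}^2+\|\rho\|_{L^2}^2)D(s)^{1/2}\,ds\lesssim |S(t)|^{1/2}(1+t)^{1/2},
\]
and with $|S(t)|\sim(1+t)^{-1}$ this is $O(1)$ --- again no decay to feed back.

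The paper resolves this by using a \emph{logarithmic} weight $f(t)=\ln^3(e+t)$ and the corresponding ball $S_0(t)=\{|\xi|^2\le 2C_2 f'(t)/f(t)\}$, so that $|S_0(t)|\sim ((e+t)\ln(e+t))^{-1}$. Then $|S_0(t)|^{1/2}(1+t)^{1/2}\sim\ln^{-1/2}(e+t)$, which \emph{does} decay; plugging this into $\frac{d}{dt}[f(t)E_\eta]\le Cf'(t)\int_{S_0}(|\hat\rho|^2+|\hat u|^2)\,d\xi+\ldots$ and integrating yields $E_\eta\lesssim\ln^{-1/2}(e+t)$ as the seed. The bootstrap then reuses this bound inside the time integral of the nonlinearity (not via Plancherel on the low-frequency mass), each pass gaining an extra $\ln^{-1/2}$. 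So the missing idea is the logarithmic choice of $f$, and the place where the prior decay is inserted is the \emph{nonlinear source integral} $\int_0^t(\|u\|_{L^2}^2+\|\rho\|_{L^2}^2)D^{1/2}\,ds$, not the low-frequency $L^2$ mass directly.
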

\begin{proof}
Consider that
$$E_\eta(t)=\sum_{n=0,s}(\|h(\rho)^{\frac 1 2}\Lambda^n\rho\|^2_{L^{2}}+\|(1+\rho)^{\frac 1 2}\Lambda^nu\|^2_{L^{2}})+\lambda\|g\|^2_{H^{s}(\mathcal{L}^{2})}+2\eta\sum_{m=0,s-1}\int_{\mathbb{R}^{2}} \Lambda^m u\nabla\Lambda^m \rho dx,$$
and
$$D_\eta(t)=\eta\gamma\|\nabla\rho\|^2_{H^{s-1}}+\mu\|\nabla u\|^2_{H^{s}}+(\mu+\mu')\|divu\|^2_{H^{s}}+\lambda\|\nabla_R g\|^2_{H^{s}(\mathcal{L}^{2})}.$$
For some sufficiently small constant $\eta>0$, we obtain $E(t)\sim E_\eta(t)$ and $D(t)\sim D_\eta(t)$.

In the proof of Theorem \ref{th1}, see \cite{2021Luo}, we have the following global energy estimation:
\begin{align}\label{ineq1}
\frac d {dt} E_\eta+D_\eta\leq 0.
\end{align}
Define $S_0(t)=\{\xi:|\xi|^2\leq 2C_2\frac {f'(t)} {f(t)}\}$ with $f(t)=\ln^{3}(e+t)$ and $C_2$ large enough. According to Schonbek's strategy, we obtain
\begin{align}\label{ineq2}
&\frac d {dt} [f(t)E_\eta(t)]+C_2 f'(t)(\mu\|u\|^2_{H^{s}}+\eta\gamma\|\rho\|^2_{H^{s-1}})
+f(t)\|\nabla_R g\|^2_{H^{s}(\mathcal{L}^{2})}  \\ \notag
&\leq Cf'(t)\int_{S_0(t)}|\hat{u}(\xi)|^2+|\hat{\rho}(\xi)|^2d\xi+f'(t)\|\Lambda^s \rho\|^2_{L^2}.
\end{align}

We now focus on the $L^2$ estimate to the low frequency part of $\rho$ and $u$. Applying Fourier transform to \eqref{eq1}, we obtain
\begin{align}\label{eq2}
\left\{
\begin{array}{ll}
\hat{\rho}_t+i\xi_{k} \hat{u}^k=\hat{F},  \\[1ex]
\hat{u}^{j}_t+\mu|\xi|^2 \hat{u}^j+(\mu+\mu')\xi_{j} \xi_{k} \hat{u}^k+i\xi_{j} \gamma\hat{\rho}-i\xi_{k} \hat{\tau}^{jk}=\hat{G}^j,  \\[1ex]
\hat{g}_t+\mathcal{L}\hat{g}-i\xi_{k} \hat{u}^j R_j \partial_{R_k}\mathcal{U}+i\xi_{k} \hat{u}^k=\hat{H}, \\[1ex]
\end{array}
\right.
\end{align}
where $F=-div(\rho u)$, $G=-u\cdot\nabla u+[i(\rho)-1](div\Sigma(u)+div \tau)+[\gamma-h(\rho)]\nabla\rho$ and $H=-u\cdot\nabla g-\frac 1 {\psi_\infty} \nabla_R\cdot(\nabla u Rg\psi_\infty)$.  \\
From \eqref{eq2}, using the fact $\int_{B}i\xi_{k} \hat{u}^k\bar{\hat{g}}\psi_\infty dR=0$, we deduce that
\begin{align}\label{eq3}
\left\{
\begin{array}{ll}
\frac 1 2 \frac d {dt} |\hat{\rho}|^2+\mathcal{R}e[i\xi\cdot\hat{u}\bar{\hat{\rho}}]=\mathcal{R}e[\hat{F}\bar{\hat{\rho}}], \\[1ex]
\frac 1 2 \frac d {dt} |\hat{u}|^2+\mathcal{R}e[\gamma\hat{\rho}i\xi\cdot\bar{\hat{u}}]+\mu|\xi|^2 |\hat{u}|^2+(\mu+\mu')|\xi\cdot\hat{u}|^2-\mathcal{R}e[i\xi\otimes\bar{\hat{u}}(t,\xi):\hat{\tau}]=\mathcal{R}e[\hat{G}\cdot\bar{\hat{u}}], \\[1ex]
\frac 1 2 \frac d {dt} \|\hat{g}\|^2_{\mathcal{L}^2}+\|\nabla_R \hat{g}\|^2_{\mathcal{L}^2}-\mathcal{R}e[i\xi\otimes\hat{u}:\bar{\hat{\tau}}]
=\mathcal{R}e[\int_{B}\hat{H}\bar{\hat{g}}\psi_\infty dR].
\end{array}
\right.
\end{align}
One can verify that
$$\mathcal{R}e[i\xi\cdot\hat{u}\bar{\hat{\rho}}]+\mathcal{R}e[\hat{\rho}i\xi\cdot\bar{\hat{u}}]
=\mathcal{R}e[i\xi\otimes\bar{\hat{u}}(t,\xi):\hat{\tau}]+\mathcal{R}e[i\xi\otimes\hat{u}:\bar{\hat{\tau}}]=0,$$
which implies that
\begin{align}\label{eq4}
&\frac 1 2 \frac d {dt} (\gamma|\hat{\rho}|^2+|\hat{u}|^2+\|\hat{g}\|^2_{\mathcal{L}^2})+\mu|\xi|^2 |\hat{u}|^2+(\mu+\mu')|\xi\cdot\hat{u}|^2
+\|\nabla_R \hat{g}\|^2_{\mathcal{L}^2}  \\ \notag
&=\mathcal{R}e[\gamma\hat{F}\bar{\hat{\rho}}]+\mathcal{R}e[\hat{G}\cdot\bar{\hat{u}}]+\mathcal{R}e[\int_{B}\hat{H}\bar{\hat{g}}\psi_\infty dR].
\end{align}
Multiplying $i\xi\cdot\bar{\hat{u}}$ to the first equation of \eqref{eq2}, multiplying $-i\xi_j\bar{\hat{\rho}}$ with $1\leq j\leq 2$ to the second equation of \eqref{eq2} and taking the real part, we can deduce that
\begin{align}\label{eq5}
\mathcal{R}e[\hat{\rho}_t i\xi\cdot\bar{\hat{u}}]-|\xi\cdot\hat{u}|^2=\mathcal{R}e[\hat{F}i\xi\cdot\bar{\hat{u}}],
\end{align}
and
\begin{align}\label{eq6}
\mathcal{R}e[\hat{\rho}i\xi\cdot\bar{\hat{u}}_t]+\gamma|\xi|^2 |\hat{\rho}|^2+(2\mu+\mu')|\xi|^2 \mathcal{R}e[\hat{\rho}i\xi\cdot\bar{\hat{u }}] -\mathcal{R}e[\hat{\rho}\xi\otimes\xi:\bar{\hat{\tau}}]=\mathcal{R}e[\bar{\hat{G}}\cdot i\xi\hat{\rho}].
\end{align}
It follows from \eqref{eq4}$-$\eqref{eq6} that
\begin{align}\label{eq7}
&\frac 1 2 \frac d {dt} (\gamma|\hat{\rho}|^2+|\hat{u}|^2+\|\hat{g}\|^2_{\mathcal{L}^2}+2(\mu+\mu')\mathcal{R}e[\hat{\rho}i\xi\cdot\bar{\hat{u}}])+\mu|\xi|^2 |\hat{u}|^2+(\mu+\mu')\gamma|\xi|^2 |\hat{\rho}|^2
+\|\nabla_R \hat{g}\|^2_{\mathcal{L}^2}  \\ \notag
&=-(\mu+\mu')(2\mu+\mu')|\xi|^2 \mathcal{R}e[\hat{\rho}i\xi\cdot\bar{\hat{u }}] +(\mu+\mu')\mathcal{R}e[\hat{\rho}\xi\otimes\xi:\bar{\hat{\tau}}]+(\mu+\mu')\mathcal{R}e[\hat{F}i\xi\cdot\bar{\hat{u}}]  \\ \notag
&+(\mu+\mu')\mathcal{R}e[\bar{\hat{G}}\cdot i\xi\hat{\rho}]+\mathcal{R}e[\gamma\hat{F}\bar{\hat{\rho}}]+\mathcal{R}e[\hat{G}\cdot\bar{\hat{u}}]+\mathcal{R}e[\int_{B}\hat{H}\bar{\hat{g}}\psi_\infty dR].
\end{align}
Consider $\xi\in S_0(t)$ and sufficiently large $t$, by Lemma \ref{Lemma1} and \ref{Lemma2}, we deduce that
\begin{align}\label{ineq3}
&|\hat{\rho}|^2+|\hat{u}|^2+\|\hat{g}\|^2_{\mathcal{L}^2}
\leq C(|\hat{\rho}_0|^2+|\hat{u}_0|^2+\|\hat{g}_0\|^2_{\mathcal{L}^2})+C\int_{0}^{t}|\hat{G}\cdot\bar{\hat{u}}|+|\widehat{\rho u}|^2+|\hat{G}|^2 ds  \\ \notag
&+C_\delta\int_{0}^{t}\int_{B}\psi_\infty|\mathcal{F}(u\cdot\nabla g)|^2+\psi_\infty|\mathcal{F}(\nabla u\cdot{R}g)|^2 dRds.
\end{align}
Integrating over $S_0(t)$ with $\xi$, then we have the following estimation to \eqref{eq2}:
\begin{align}\label{ineq4}
&\int_{S_0(t)}|\hat{\rho}|^2+|\hat{u}|^2+\|\hat{g}\|^2_{\mathcal{L}^2}d\xi
\leq C\int_{S_0(t)} |\hat{\rho}_0|^2+|\hat{u}_0|^2+\|\hat{g}_0\|^2_{\mathcal{L}^2}d\xi+C\int_{S_0(t)}\int_{0}^{t}|\hat{G}\cdot\bar{\hat{u}}|+|\widehat{\rho u}|^2+|\hat{G}|^2dsd\xi  \\ \notag
&+C_\delta\int_{S_0(t)}\int_{0}^{t}\int_{B}\psi_\infty|\mathcal{F}(u\cdot\nabla g)|^2+\psi_\infty|\mathcal{F}(\nabla u\cdot{R}g)|^2 dRdsd\xi.
\end{align}
If $E(0)<\infty$ and $(\rho_0,u_0,g_0)\in \dot{B}^{-1}_{2,\infty}\times \dot{B}^{-1}_{2,\infty}\times \dot{B}^{-1}_{2,\infty}(\mathcal{L}^2)$, applying Proposition \ref{pro0}, we have
\begin{align*}
\int_{S_0(t)}(|\hat{\rho}_0|^2+|\hat{u}_0|^2+\|\hat{g}_0\|^2_{\mathcal{L}^2})d\xi
&\leq\sum_{j\leq \log_2[\frac {4} {3}C_2^{\frac 1 2 }\sqrt{\frac {f'(t)}{f(t)}}]}\int_{\mathbb{R}^{d}} 2\varphi^2(2^{-j}\xi)(|\hat{\rho}_0|^2+|\hat{u}_0|^2+\|\hat{g}_0\|^2_{\mathcal{L}^2})d\xi \\
&\leq\sum_{j\leq \log_2[\frac {4} {3}C_2^{\frac 1 2 }\sqrt{\frac {f'(t)}{f(t)}}]}(\|\dot{\Delta}_j u_0\|^2_{L^2}+\|\dot{\Delta}_j \rho_0\|^2_{L^2}+\|\dot{\Delta}_j g_0\|^2_{L^2(\mathcal{L}^2)}) \\
&\leq\sum_{j\leq \log_2[\frac {4} {3}C_2^{\frac 1 2 }\sqrt{\frac {f'(t)}{f(t)}}]}2^{2j}(\|u_0\|^2_{\dot{B}^{-1}_{2,\infty}}+\|\rho_0\|^2_{\dot{B}^{-1}_{2,\infty}}+\|g_0\|^2_{\dot{B}^{-1}_{2,\infty}(\mathcal{L}^2)}) \\
&\leq C\frac {f'(t)}{f(t)}(\|u_0\|^2_{\dot{B}^{-1}_{2,\infty}}+\|\rho_0\|^2_{\dot{B}^{-1}_{2,\infty}}+\|g_0\|^2_{\dot{B}^{-1}_{2,\infty}(\mathcal{L}^2)}).
\end{align*}
Thanks to Minkowski's inequality and Theorem \ref{th1}, we get
\begin{align}\label{ineq5}
\int_{S_0(t)}\int_{0}^{t}|\hat{G}|^2dsd\xi
&\leq C\int_{S_0(t)}d\xi \int_{0}^{t}\|\hat{G}^2\|_{L^{\infty}}ds \\ \notag
&\leq C\frac {f'(t)}{f(t)}.
\end{align}
Using Theorem \ref{th1} and Lemma \ref{Lemma1}, we get
\begin{align}\label{ineq6}
&\int_{S_0(t)}\int_{0}^{t}\int_{B}\psi_\infty|\mathcal{F}(u\cdot\nabla g)|^2+\psi_\infty|\mathcal{F}(\nabla u\cdot{R}g)|^2 dRdsd\xi \\ \notag
&\leq C\frac {f'(t)}{f(t)} \int_{0}^{t}\|u\|^2_{L^{2}}\|\nabla g\|^2_{L^{2}(\mathcal{L}^{2})}+\|\nabla u\|^2_{L^{2}}\|g\|^2_{L^{2}(\mathcal{L}^{2})}ds \\ \notag
&\leq C\frac {f'(t)}{f(t)}.
\end{align}
Similarly, we have
\begin{align}\label{ineq7}
&\int_{S_0(t)}\int_{0}^{t}|\hat{G}\cdot\bar{\hat{u}}|+|\widehat{\rho u}|^2  dsd\xi
=\int_{0}^{t}\int_{S_0(t)}|\hat{G}\cdot\bar{\hat{u}}|+|\widehat{\rho u}|^2 d\xi ds  \\ \notag
&\leq C\sqrt{\frac {f'(t)} {f(t)}} \int_{0}^{t}(\|u\|^2_{L^{2}}+\|\rho\|^2_{L^{2}})D(s)^{\frac 1 2}ds+C\frac {f'(t)} {f(t)} \int_{0}^{t}\|u\|^2_{L^{2}}\|\rho\|^2_{L^{2}}ds  \\ \notag
&\leq C\sqrt{\frac {f'(t)} {f(t)}}(1+t)^{\frac 1 2}+C\frac {f'(t)} {f(t)}(1+t).
\end{align}
Plugging the above estimates into \eqref{ineq4}, we obtain
\begin{align}\label{ineq8}
\int_{S_0(t)}|\hat{\rho}|^2+|\hat{u}|^2 d\xi\leq C\ln^{-\frac 1 2}(e+t).
\end{align}
According to \eqref{ineq2} and \eqref{ineq8}, we deduce that
\begin{align}\label{ineq9}
&\frac d {dt} [f(t)E_\eta(t)]+C_2 f'(t)(\mu\|u\|^2_{H^{s}}+\eta\gamma\|\rho\|^2_{H^{s-1}})
+f(t)\|\nabla_R g\|^2_{H^{s}(\mathcal{L}^{2})}  \\ \notag
&\leq Cf'(t)\ln^{-\frac 1 2}(e+t)+2f'(t)\|\Lambda^s \rho\|^2_{L^2},
\end{align}
which implies that
\begin{align*}
f(t)E_\eta(t)\leq C\int_{0}^{t}f'(s)\ln^{-\frac 1 2}(e+s)ds+C\int_{0}^{t}2f'(s)\|\Lambda^s \rho\|^2_{L^2}ds \leq C\ln^{\frac 5 2}(e+t).
\end{align*}
We thus get
\begin{align}\label{ineq10}
E_\eta(t)\leq C\ln^{-\frac 1 2}(e+t).
\end{align}
We improve the $L^2$ decay rate in \eqref{ineq10} by using the bootstrap argument.
According to \eqref{ineq7} and \eqref{ineq10}, we have
\begin{align}\label{ineq11}
&\int_{S_0(t)}\int_{0}^{t}|\hat{G}\cdot\bar{\hat{u}}|+|\widehat{\rho u}|^2  dsd\xi  \\ \notag
&\leq C\sqrt{\frac {f'(t)} {f(t)}} \int_{0}^{t}(\|u\|^2_{L^{2}}+\|\rho\|^2_{L^{2}})D(s)^{\frac 1 2}ds+C\frac {f'(t)} {f(t)} \int_{0}^{t}\|u\|^2_{L^{2}}\|\rho\|^2_{L^{2}}ds  \\ \notag
&\leq C\sqrt{\frac {f'(t)} {f(t)}}(1+t)^{\frac 1 2}\ln^{-\frac 1 2}(e+t)+C\frac {f'(t)} {f(t)}(1+t)\ln^{-1}(e+t),
\end{align}
where in the last inequality we have used the fact that
\begin{align*}
\lim_{t\rightarrow\infty}\frac {\int_{0}^{t}\ln^{-1}(e+s)ds} {(1+t)\ln^{-1}(e+t)}=\lim_{t\rightarrow\infty}\frac {\ln^{-1}(e+t)} {\ln^{-1}(1+t)-\ln^{-2}(e+t)}  =1.
\end{align*}
Then the proof of \eqref{ineq8} implies that
\begin{align}\label{ineq12}
\int_{S_0(t)}|\hat{\rho}|^2+|\hat{u}|^2 d\xi\leq C\ln^{-1}(e+t).
\end{align}
According to \eqref{ineq2} and \eqref{ineq12}, we deduce that
\begin{align*}
E_\eta\leq C\ln^{-1}(e+t).
\end{align*}
Using the bootstrap argument, for any $l\in N^{+}$, we obtain
\begin{align}\label{ineq13}
E_\eta\leq C\ln^{-l}(e+t).
\end{align}
We thus complete the proof of Proposition \ref{prop1}
\end{proof}

In order to improve the decay rate, we estimate the following high order energy:
\begin{align*}
E^1_{\eta}(t)&=\sum_{n=1,s}(\|h(\rho)^{\frac 1 2}\Lambda^n\rho \|^2_{L^2}+\|(1+\rho)^{\frac 1 2}\Lambda^n u\|^2_{L^2})  \\ \notag
&+\|\Lambda^1 g\|^2_{H^{s-1}(\mathcal{L}^{2})}+2\eta\sum_{m=1,s-1}\int_{\mathbb{R}^{2}} \Lambda^m u\nabla\Lambda^m \rho dx,
\end{align*}
and
$$D^1_{\eta}(t)=\eta\gamma\|\nabla\Lambda^1 \rho\|^2_{H^{s-2}}+\mu\|\nabla\Lambda^1 u\|^2_{H^{s-1}}+(\mu+\mu')\|div\Lambda^1u\|^2_{H^{s-1}}+\|\Lambda^1\nabla_R g\|^2_{H^{s-1}(\mathcal{L}^{2})}.$$
The following Lemma can be proved by the standard energy method. Thus we omit the
proof here.
\begin{lemm}\label{Lemma3}
Let $(\rho_,u,g)\in L^{\infty}([0,+\infty);H^s\times H^s\times H^s(\mathcal{L}^2))$  be global strong solutions constructed in Theorem \ref{th1}. If $t\in(0,+\infty)$, then we have
\begin{align}
\frac d {dt}E^1_{\eta}(t)+D^1_{\eta}(t)\leq 0.
\end{align}
\end{lemm}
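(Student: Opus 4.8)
\textbf{Proof proposal for Lemma \ref{Lemma3}.}

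The plan is to mimic exactly the energy argument behind inequality \eqref{ineq1} (and behind Theorem \ref{th1}), but now applied to the once-differentiated system rather than to $(\rho,u,g)$ themselves. First I would apply $\Lambda^n$ for $n=1,\dots,s$ to the three equations of \eqref{eq1}, take the $L^2$ (resp.\ $L^2(\mathcal{L}^2)$) inner product of the $\rho$-equation with $h(\rho)\Lambda^n\rho$, of the $u$-equation with $(1+\rho)\Lambda^n u$, and of the $g$-equation with $\Lambda^n g$, and sum. The two linear coupling terms coming from $\nabla\rho$ in the velocity equation and $\mathrm{div}\,u$ in the density equation cancel up to a controllable commutator, the viscous and Fokker--Planck terms produce the dissipation $\mu\|\nabla\Lambda^n u\|_{L^2}^2$, $(\mu+\mu')\|\mathrm{div}\,\Lambda^n u\|_{L^2}^2$ and $\|\Lambda^n\nabla_R g\|_{L^2(\mathcal{L}^2)}^2$, and the term $\mathrm{div}\,\tau(g)$ is absorbed into $\|\Lambda^n\nabla_R g\|_{L^2(\mathcal{L}^2)}^2$ via Lemma \ref{Lemma2} (taking $\delta$ small). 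The coupling of $\|\Lambda^n g\|^2$ with the extra $\Lambda^{n-1}$-level of the $g$-norm in $E^1_\eta$ is why the $g$-part is measured in $H^{s-1}(\mathcal{L}^2)$ starting from order $1$: the full range $n=1,\dots,s$ of $\nabla_R$-dissipation controls it through Lemma \ref{Lemma1}.

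Next I would handle the nonlinear (quadratic and higher) terms: $u\cdot\nabla\rho$, $u\cdot\nabla u$, $u\cdot\nabla g$, the drag term $\psi_\infty^{-1}\nabla_R\cdot(\nabla u\,Rg\,\psi_\infty)$, and the $\rho$-dependent factors $\tfrac{1}{1+\rho}$, $h(\rho)=\tfrac{P'(1+\rho)}{1+\rho}$, $i(\rho)-1$. Each is estimated by the usual product and commutator estimates in $H^s$ (Moser-type inequalities), using the smallness $\sup_t E(t)\le C_0 E(0)\le C_0\epsilon_0$ from Theorem \ref{th1} so that every such term carries a factor $\sqrt{E(t)}\le\sqrt{C_0\epsilon_0}$ in front of $D^1_\eta(t)$ (or of $D_\eta(t)$, the lowest-order piece of which we already know is bounded and integrable). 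Thus all nonlinear contributions are absorbed into the left-hand side once $\epsilon_0$ is small. Finally I would add $2\eta\sum_{m=1,s-1}\tfrac{d}{dt}\int\Lambda^m u\cdot\nabla\Lambda^m\rho\,dx$: differentiating in time and using the equations, this produces the positive term $\eta\gamma\|\nabla\Lambda^1\rho\|_{H^{s-2}}^2$ (the missing density dissipation), at the cost of terms bounded by $C\eta(\|\nabla\Lambda^1 u\|_{H^{s-1}}^2+\|\Lambda^1\nabla_R g\|_{H^{s-1}(\mathcal{L}^2)}^2)$ plus nonlinear remainders; choosing $\eta$ small keeps these inside the dissipation and, simultaneously, guarantees $E^1_\eta\sim \|\Lambda^1\rho\|_{H^{s-1}}^2+\|\Lambda^1 u\|_{H^{s-1}}^2+\|\Lambda^1 g\|_{H^{s-1}(\mathcal{L}^2)}^2$ and $D^1_\eta\sim$ the natural dissipation, so the inequality $\tfrac{d}{dt}E^1_\eta+D^1_\eta\le0$ follows.

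The only real obstacle is bookkeeping: one must check that no term in the differentiated estimate is genuinely of top order in $u$ or $g$ without a compensating small factor or a $\nabla_R$-gain — in particular the drag term $\nabla_R\cdot(\nabla u\,Rg\,\psi_\infty)$, whose $\Lambda^n$-derivative puts $s$ space-derivatives on $u$, must be paired (after integration by parts in $R$) against $\Lambda^n\nabla_R g$ and then split by Hölder so that the high derivative always lands on a factor already controlled by $\sqrt{E(t)}$. Since this is precisely the structure already exploited in the proof of Theorem \ref{th1} in \cite{2021Luo}, the argument is routine and we omit the details.
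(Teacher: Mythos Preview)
Your proposal is correct and is precisely the ``standard energy method'' the paper alludes to; the paper itself omits the proof of this lemma entirely, so there is nothing to compare against. One minor remark: the notation $\sum_{n=1,s}$ in the definition of $E^1_\eta$ means only the two levels $n=1$ and $n=s$ (not all $n=1,\dots,s$), but this does not affect the validity of your sketch.
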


The following proposition is about the high order energy estimate.
\begin{prop}\label{prop2}
Let $d=2$. Under the condition in Theorem \ref{th2}, for any $l\in N^{+}$, then there exists a constant $C$ such that
\begin{align}\label{decay2}
E^1_{\eta}(t)\leq C(1+t)^{-1}\ln^{-l}(e+t),
\end{align}
and
\begin{align}\label{decay3}
\int_{0}^{t}(1+s)f(s)\|\Lambda^1 \nabla_R g\|^2_{H^{s-1}(\mathcal{L}^{2})}ds\leq C.
\end{align}
\end{prop}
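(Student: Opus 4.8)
The plan is to run the same Fourier-splitting / time-weighted energy scheme that produced Proposition \ref{prop1}, but now applied to the high order functional $E^1_\eta$ and with an extra polynomial weight $(1+t)$. The starting point is Lemma \ref{Lemma3}, which gives the differential inequality $\frac{d}{dt}E^1_\eta + D^1_\eta \le 0$. Multiplying by the weight $w(t)=(1+t)f(t)$ with $f(t)=\ln^3(e+t)$ and integrating by parts in time, I would write
\begin{align*}
\frac{d}{dt}\big[(1+t)f(t)E^1_\eta(t)\big] + (1+t)f(t)D^1_\eta(t) \le \big[f(t)+(1+t)f'(t)\big]E^1_\eta(t).
\end{align*}
Since $f'(t)\lm f(t)/(1+t)$, the right-hand side is controlled by $f(t)E^1_\eta(t)$, so the whole argument reduces to absorbing $f(t)E^1_\eta(t)$ by the dissipation $(1+t)f(t)D^1_\eta(t)$ up to a low-frequency remainder. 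As in Proposition \ref{prop1}, I introduce the time-dependent ball $S_0(t)=\{\xi:|\xi|^2\le 2C_2 f'(t)/f(t)\}$; on its complement the factor $|\xi|^2$ appearing in $D^1_\eta$ (note $D^1_\eta$ controls $\|\nabla\Lambda^1(\rho,u)\|^2$ and $\|\Lambda^1\nabla_R g\|^2$, i.e. one extra derivative everywhere) dominates a constant multiple of $f'(t)/f(t)$ times $E^1_\eta$, while the contribution of $\Lambda^s\rho$ is handled exactly as the $\|\Lambda^s\rho\|_{L^2}^2$ term was in \eqref{ineq9}. This yields
\begin{align*}
\frac{d}{dt}\big[(1+t)f(t)E^1_\eta(t)\big] + (1+t)f(t)\|\Lambda^1\nabla_R g\|^2_{H^{s-1}(\mathcal{L}^2)} \lm f(t)\int_{S_0(t)}\big(|\Lambda^1\hat\rho|^2+|\Lambda^1\hat u|^2\big)d\xi + (\text{nonlinear remainders}),
\end{align*}
which simultaneously produces the decay bound \eqref{decay2} and, after integrating in time, the dissipation bound \eqref{decay3}.

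The core of the work is the low-frequency estimate for $\int_{S_0(t)}|\xi|^2\big(|\hat\rho|^2+|\hat u|^2\big)d\xi$. Here I reuse the pointwise-in-$\xi$ identity \eqref{eq7}: I take the combination $\gamma|\hat\rho|^2+|\hat u|^2+\|\hat g\|^2_{\mathcal{L}^2}+2(\mu+\mu')\mathcal{R}e[\hat\rho\, i\xi\cdot\bar{\hat u}]$, multiply through by $|\xi|^2$, and integrate over $S_0(t)$. The point is that on $S_0(t)$ one has $|\xi|^2\lm f'(t)/f(t)\to 0$, so $\int_{S_0(t)}|\xi|^2(\cdots)d\xi\lm \frac{f'(t)}{f(t)}\int_{S_0(t)}(\cdots)d\xi$, and the inner integral is exactly the quantity already estimated in \eqref{ineq8}/\eqref{ineq12} and ultimately bounded by $\ln^{-l}(e+t)$ via Proposition \ref{prop1}. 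The initial-data term becomes $\int_{S_0(t)}|\xi|^2(|\hat\rho_0|^2+|\hat u_0|^2+\|\hat g_0\|^2_{\mathcal{L}^2})d\xi$, which by the Littlewood-Paley computation in the proof of Proposition \ref{prop1} is bounded by $(f'(t)/f(t))^2$ times the $\dot B^{-1}_{2,\infty}$ norms, hence decays faster than needed. The nonlinear source terms $\hat F,\hat G,\hat H$ carry one extra factor of $\xi$ (from the divergence structure) which is harmless on the low-frequency ball, and are estimated by Minkowski's inequality together with the bounds $\|u\|_{L^2},\|\rho\|_{L^2}\lm\ln^{-l/2}(e+t)$ (Proposition \ref{prop1}), $\int_0^\infty D(s)ds\le C_0E(0)$, and Lemmas \ref{Lemma1}, \ref{Lemma2} exactly as in \eqref{ineq5}--\eqref{ineq7}; the resulting bounds are of the form $C\sqrt{f'(t)/f(t)}\,(1+t)^{1/2}\ln^{-l/2}(e+t)+C(f'(t)/f(t))(1+t)\ln^{-l}(e+t)$, so after multiplying by $f(t)$, integrating in time and dividing by $(1+t)f(t)$ one recovers $(1+t)^{-1}\ln^{-l}(e+t)$.

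The main obstacle I anticipate is bookkeeping the interplay between the polynomial weight $(1+t)$ and the logarithmic gains: one must check that when the nonlinear remainder $C\sqrt{f'(t)/f(t)}(1+t)^{1/2}\ln^{-l/2}(e+t)$ is multiplied by $f(t)$ and integrated, the primitive is still $\lm (1+t)f(t)\ln^{-l}(e+t)$, which requires a limit computation of the type already displayed after \eqref{ineq11} (comparing $\int_0^t (1+s)^{-1/2}\ln^{a}(e+s)\,ds$ with $(1+t)^{1/2}\ln^{a}(e+t)$). A second delicate point is that $E^1_\eta$ only controls $\Lambda^1$ and higher derivatives, so there is no zeroth-order term to absorb the low-frequency leftover directly; this is precisely why the argument must route through the already-established decay of the \emph{full} energy $E_\eta$ from Proposition \ref{prop1} to close the estimate. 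Once these two accounting issues are settled, \eqref{decay2} follows by dividing the weighted inequality by $(1+t)f(t)$ and \eqref{decay3} follows by integrating it from $0$ to $t$ and discarding the nonnegative boundary term, using that the right-hand side has finite time integral.
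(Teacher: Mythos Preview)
Your plan contains the right ingredients but the Fourier–splitting absorption step, as you have set it up, does not close. You multiply Lemma~\ref{Lemma3} by $w(t)=(1+t)f(t)$ with $f(t)=\ln^{3}(e+t)$ and then split on $S_0(t)=\{|\xi|^2\le 2C_2 f'(t)/f(t)\}$. On $S_0(t)^c$ this only gives $wD^1_\eta\ge 2C_2(1+t)f'(t)\,E^{1,\text{high}}_\eta$, whereas the term you must absorb is $w'E^1_\eta\sim f(t)E^1_\eta$. Since $(1+t)f'(t)\sim 3\ln^{2}(e+t)$ while $f(t)=\ln^{3}(e+t)$, the required inequality $2C_2(1+t)f'\ge f$ forces $C_2\gtrsim\ln(e+t)$, which is not a fixed constant. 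So the ``high–frequency absorption'' step in your display fails; the radius of the splitting ball must match the weight $w$, not the weight $f$.

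The paper avoids this mismatch by a different mechanism. It first records \eqref{ineq15} (i.e.\ Lemma~\ref{Lemma3} multiplied by $f(t)$ alone), bounds the low–frequency remainder trivially by $|\xi|^2\le 2C_2f'/f$ and $\int_{S_0(t)}(|\hat u|^2+|\hat\rho|^2)\,d\xi\le E_\eta(t)\le C\ln^{-l}(e+t)$ from Proposition~\ref{prop1}, and only \emph{then} multiplies by the extra polynomial factor $(1+t)\ln^{l-2}(e+t)$. The resulting residual $\int_0^t\ln^{l+1}(e+s)E^1_\eta\,ds$ is \emph{not} absorbed by Fourier splitting at all; instead the paper uses the structural observation
\[
E^1_\eta\ \lm\ D_\eta
\]
(which holds because every summand of $E^1_\eta$ carries at least one derivative: $\|\Lambda^1\rho\|^2_{H^{s-1}}\le\|\nabla\rho\|^2_{H^{s-1}}$, $\|\Lambda^1 u\|^2_{H^{s-1}}\le\|\nabla u\|^2_{H^s}$, and $\|\Lambda^1 g\|^2_{H^{s-1}(\mathcal L^2)}\lm\|\nabla_R g\|^2_{H^s(\mathcal L^2)}$ by Lemma~\ref{Lemma1}). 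Combining this with the global inequality \eqref{ineq1} and the decay $E_\eta\le C\ln^{-l}(e+t)$ yields $\int_0^t\ln^{l+1}E^1_\eta\,ds\lm\int_0^t\ln^{l+1}D_\eta\,ds\lm\int_0^t(1+s)^{-1}\ln^{l}E_\eta\,ds\le C\ln(e+t)$, which closes \eqref{decay2}; then \eqref{decay3} follows by integrating \eqref{ineq15} against $(1+s)$ and invoking \eqref{decay2}.

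Two further remarks. First, the discussion of the nonlinear sources $\hat F,\hat G,\hat H$ and the bounds of type $C\sqrt{f'/f}\,(1+t)^{1/2}\ln^{-l/2}(e+t)$ is entirely unnecessary here: those terms were already absorbed in Lemma~\ref{Lemma3}, so the only input beyond \eqref{ineq15} is Proposition~\ref{prop1} and the inequality $E^1_\eta\lm D_\eta$. Second, if you prefer to keep the direct weight $w=(1+t)f(t)$, the fix is to split on the polynomial ball $\{|\xi|^2\le C_2 w'/w\sim C_2/(1+t)\}$ rather than on $S_0(t)$; then the absorption works and the low–frequency remainder is still controlled by Proposition~\ref{prop1}.
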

\begin{proof}
Applying Lemma \ref{Lemma3}, we have
\begin{align}\label{ineq14}
\frac d {dt} E^1_{\eta}+D^1_{\eta}\leq 0,
\end{align}
which implies that
\begin{align}\label{ineq15}
&\frac d {dt} [f(t)E^1_{\eta}]+C_2 f'(t)(\mu\|\Lambda^1 u\|^2_{H^{s-1}}+\eta\gamma\|\Lambda^1 \rho\|^2_{H^{s-2}})
+f(t)\|\Lambda^1 \nabla_R g\|^2_{H^{s-1}(\mathcal{L}^{2})}  \\ \notag
&\leq Cf'(t)\int_{S_0(t)}|\xi|^2(|\hat{u}(\xi)|^2+|\hat{\rho}(\xi)|^2) d\xi+f'(t)\|\Lambda^s \rho\|^2_{L^{2}}.
\end{align}
According to \eqref{ineq13}, we have
\begin{align*}
f'(t)\int_{S_0(t)}|\xi|^2(|\hat{u}(\xi)|^2+|\hat{\rho}(\xi)|^2) d\xi\leq C (1+t)^{-2}\ln^{-l+1}(e+t).
\end{align*}
This together with \eqref{ineq13}, \eqref{ineq15} and \eqref{ineq1} ensures that
\begin{align*}
(1+t)\ln^{l+1}(e+t)E^1_{\eta}&\leq C+C\ln(e+t)+C\int_{0}^{t}\ln^{l}(e+s)\|\Lambda^s \rho\|^2_{L^{2}}ds+C\int_{0}^{t}\ln^{l+1}(e+s)E^1_{\eta}ds \\
&\leq C\ln(e+t)+C\int_{0}^{t}\ln^{l+1}(e+s)D_\eta ds  \\
&\leq C\ln(e+t)+C\int_{0}^{t}(1+s)^{-1}\ln^{l}(e+s)E_\eta ds \\
&\leq C\ln(e+t),
\end{align*}
which implies that
\begin{align}\label{ineq16}
E^1_{\eta}\leq C (1+t)^{-1}\ln^{-l}(e+t).
\end{align}
Using \eqref{ineq15} and \eqref{ineq16} with $l\geq 5$, we have the following estimate for $\nabla_R g$:
\begin{align}\label{ineq17}
&\int_{0}^{t}(1+s)f(s)\|\Lambda^1 \nabla_R g\|^2_{H^{s-1}(\mathcal{L}^{2})}ds  \\ \notag
&\leq C+\int_{0}^{t}(1+s)f'(s)\|\Lambda^s \rho\|^2_{L^{2}}ds+C\int_{0}^{t}f(s)E^1_{\eta}ds\leq C.
\end{align}
We thus complete the proof of Proposition \ref{prop2}.
\end{proof}

By virtue of the standard method, one can not obtain the optimal decay rate. However, we can obtain a weak result as follow.
\begin{prop}\label{prop3}
Under the condition in Theorem \ref{th2}, then there exists a constant $C$ such that
\begin{align}\label{decay4}
E_{\eta}(t)\leq C(1+t)^{-\frac 1 2},
\end{align}
and
\begin{align}\label{decay5}
E^1_{\eta}(t)\leq C(1+t)^{-\frac 3 2}.
\end{align}
\end{prop}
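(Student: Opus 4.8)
The plan is to upgrade the purely logarithmic decay of Proposition~\ref{prop1} to the algebraic rate $(1+t)^{-\frac12}$ by a time--weighted energy argument, bootstrapping in tandem with the accelerated high order decay of Proposition~\ref{prop2}. For \eqref{decay4} I would multiply \eqref{ineq1} by the polynomial weight $f(t)=(1+t)^{\frac12}$ and perform the frequency splitting of \eqref{ineq2} with $S_0(t)=\{\xi:|\xi|^2\le 2C_2f'(t)/f(t)\}$; since $f'(t)/f(t)=\tfrac{1}{2(1+t)}$ we have $|S_0(t)|\sim(1+t)^{-1}$. On $S_0(t)^c$ the term $f'E_\eta$ is absorbed into $fD_\eta$ once $C_2$ is chosen large; the only remainder that cannot be absorbed is the non--dissipated top order density term $f'(t)\|\Lambda^s\rho\|^2_{L^2}$, which by \eqref{decay2} obeys $\int_0^tf'(s)\|\Lambda^s\rho(s)\|^2_{L^2}\,ds\lm\int_0^t(1+s)^{-\frac12}(1+s)^{-1}\ln^{-l}(e+s)\,ds\lm1$. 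Integrating in time one arrives at
\begin{align*}
(1+t)^{\frac12}E_\eta(t)\lm 1+\int_0^t(1+s)^{-\frac12}\Big(\int_{S_0(s)}\big(|\hat\rho(s,\xi)|^2+|\hat u(s,\xi)|^2\big)\,d\xi\Big)\,ds .
\end{align*}

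It then remains to control $\mathcal I(t):=\int_{S_0(t)}(|\hat\rho|^2+|\hat u|^2)\,d\xi$. Going back to the Fourier transformed system \eqref{eq2}--\eqref{eq7} and integrating in time as in the proof of Proposition~\ref{prop1}, I would split $\mathcal I(t)$ into an initial--data part and a nonlinear part. By Proposition~\ref{pro0} and the hypotheses $(\rho_0,u_0)\in\dot B^{-1}_{2,\infty}\times\dot B^{-1}_{2,\infty}$, $g_0\in\dot B^{-1}_{2,\infty}(\mathcal L^2)$, the initial--data part is $\lm|S_0(t)|\lm(1+t)^{-1}$. For the nonlinear part one estimates, with the prefactor $|S_0(t)|$ (resp.\ $|S_0(t)|^{1/2}$ for the bilinear term $\hat G\cdot\bar{\hat u}$), the time integrals of $\|G\|^2_{L^1}$, $\|\rho u\|^2_{L^1}$, $\|G\|_{L^1}\|u\|_{L^2}$, and the $g$--terms of \eqref{ineq6}; here one uses $\|G\|_{L^1}\lm(\|u\|_{L^2}+\|\rho\|_{L^2})D_\eta^{1/2}$, the uniform bound $\sup_tE_\eta\le C$, the integrability $\int_0^\infty D_\eta\,dt<\infty$ (Theorem~\ref{th1}), and the decays of Propositions~\ref{prop1} and~\ref{prop2} (the latter for the first order factors in $G$). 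Feeding the resulting bound on $\mathcal I$ into the inequality above and bootstrapping --- each improved rate for $E_\eta$ returned by the weighted inequality sharpens the $\rho u$ and $\hat G\cdot\bar{\hat u}$ contributions and lets one remove the logarithm --- yields $(1+t)^{\frac12}E_\eta(t)\lm1$, i.e.\ \eqref{decay4}.

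Estimate \eqref{decay5} follows by applying the same scheme to the first order inequality of Lemma~\ref{Lemma3} with the weight $(1+t)^{\frac32}$: the low--frequency integral now carries an extra factor $|\xi|^2\lm(1+t)^{-1}$, so by \eqref{decay4} it is $\lm(1+t)^{-\frac32}$, while the non--dissipated top order remainder $\int_0^t(1+s)^{\frac12}\|\Lambda^s\rho(s)\|^2_{L^2}\,ds$ is absorbed using an intermediate weighted estimate for the density analogous to \eqref{decay3}. One concludes $(1+t)^{\frac32}E^1_\eta(t)\lm1$.

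The essential obstacle is the low--frequency behaviour of the quadratic terms in \eqref{eq1}: the term $\rho u$ in the continuity equation and the terms $u\cdot\nabla u$, $[i(\rho)-1](\mathrm{div}\,\Sigma(u)+\mathrm{div}\,\tau)$ and $[\gamma-h(\rho)]\nabla\rho$ in the velocity equation. Since no $L^1$ (nor $\dot B^{-1}_{2,\infty}$) bound on $u(t)$ itself is available from the system --- only on the initial datum --- these cannot be treated as the linear part, and one has to exploit the extra spatial derivative carried by $G$ (which decays at the accelerated rate of $E^1_\eta$), the uniform $L^2$ bound, the time integrability of the dissipation, and a careful bootstrap. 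The non--dissipated top order density term appearing in \eqref{ineq2} and \eqref{ineq15} is a secondary technical point, controlled by the decay and weighted bounds of Proposition~\ref{prop2}.
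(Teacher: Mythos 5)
Your overall framework is the same as the paper's: Fourier splitting on a low--frequency ball of measure $\sim(1+t)^{-1}$, the $\dot B^{-1}_{2,\infty}$ assumption to make the linear (initial--data) contribution $\lm(1+t)^{-1}$, Proposition~\ref{prop2} to handle the non--dissipated $\|\Lambda^s\rho\|^2_{L^2}$ remainder and the derivative factors in $G$, and a time--weighted integration of the energy inequality. The gap is in the closing mechanism. You propose to close by iterating decay rates (``bootstrapping \dots lets one remove the logarithm''), but this iteration cannot leave the logarithmic regime. Indeed, the decisive low--frequency term is
\begin{align*}
\int_{S(t)}\int_0^t|\hat G\cdot\bar{\hat u}|\,ds\,d\xi\;\lm\;(1+t)^{-\frac12}\int_0^t\bigl(\|u\|^2_{L^2}+\|\rho\|^2_{L^2}\bigr)\bigl(\|\nabla u\|_{H^1}+\|\nabla\rho\|_{L^2}+\|\nabla\nabla_Rg\|_{L^2(\mathcal L^2)}\bigr)ds,
\end{align*}
and if the current knowledge is $E_\eta\lm(1+s)^{-\alpha}$ (with $\alpha=0$ plus logarithms from Proposition~\ref{prop1}) and $E^1_\eta\lm(1+s)^{-1}\ln^{-l}(e+s)$, the integrand is $\lm(1+s)^{-\alpha-\frac12}\ln^{-l/2}(e+s)$, whose time integral grows like $(1+t)^{\frac12-\alpha}\ln^{-l/2}$; the prefactor $(1+t)^{-\frac12}$ exactly cancels the $(1+t)^{\frac12}$, so each pass returns the \emph{same} power $\alpha$ with only a logarithmic gain. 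Starting from the purely logarithmic decay of Proposition~\ref{prop1}, no finite number of iterations produces any positive algebraic rate, so your scheme stalls and never reaches $(1+t)^{-\frac12}$.

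The paper's proof avoids this by not inserting a known rate for $\|u\|^2_{L^2}+\|\rho\|^2_{L^2}$ at all: it defines $N(t)=\sup_{0\le s\le t}(1+s)^{\frac12}E_\eta(s)$, bounds $\|u\|^2_{L^2}+\|\rho\|^2_{L^2}\le N(s)(1+s)^{-\frac12}$ inside the nonlinear low--frequency integrals, and obtains $N(t)\le C+C\int_0^t k(s)N(s)\,ds$ with kernel $k(s)=\|\rho\|^2_{L^2}(1+s)^{-1}+(1+s)^{-\frac12}(\|\nabla u\|_{H^1}+\|\nabla\rho\|_{L^2}+\|\nabla\nabla_Rg\|_{L^2(\mathcal L^2)})$. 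The kernel is integrable precisely because of Proposition~\ref{prop1} (the factor $\ln^{-l}$ makes $\|\rho\|^2(1+s)^{-1}$ integrable), the decay \eqref{decay2}, and the weighted bound \eqref{decay3} combined with Cauchy--Schwarz for the $\|\nabla\nabla_Rg\|$ piece; Gronwall then gives $N(t)\le C$, i.e.\ \eqref{decay4}, in one step. This Gronwall argument on the weighted supremum (or an equivalent device in which the unknown decay enters linearly against an integrable kernel) is the missing idea in your plan; without it, the estimates you list (uniform energy bound, $\int_0^\infty D_\eta\,dt<\infty$, $\|G\|_{L^1}\lm(\|u\|_{L^2}+\|\rho\|_{L^2})D_\eta^{1/2}$) only reproduce Proposition~\ref{prop1}. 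Your treatment of \eqref{decay5} is essentially right, except that no new weighted density estimate is needed: once \eqref{decay4} holds, the remainder is handled by $\int_0^t(1+s)^{\frac32}\|\Lambda^s\rho\|^2_{L^2}\,ds\lm\int_0^t(1+s)^{\frac12}E_\eta\,ds\lm 1+t$.
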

\begin{proof}
Define $S(t)=\{\xi:|\xi|^2\leq C_2(1+t)^{-1}\}$ where the constant $C_2$ will be chosen later on. Using Schonbek's strategy, we split the phase space into two time-dependent domain:
$$\|\nabla u\|^2_{H^s}=\int_{S(t)}(1+|\xi|^{2s})|\xi|^2|\hat{u}(\xi)|^2 d\xi+\int_{S(t)^c}(1+|\xi|^{2s})|\xi|^2|\hat{u}(\xi)|^2 d\xi.$$
Then we can easily deduce that
$$\frac {C_2} {1+t} \int_{S(t)^c}(1+|\xi|^{2s})|\hat{u}(\xi)|^2 d\xi\leq\|\nabla u\|^2_{H^s},$$
and
$$\frac {C_2} {1+t} \int_{S(t)^c}(1+|\xi|^{2s-2})|\hat{\rho}(\xi)|^2 d\xi\leq\|\nabla \rho\|^2_{H^{s-1}}.$$
According to \eqref{ineq1}, we have
\begin{align}\label{ineq18}
\frac d {dt} E_\eta(t)+\frac {\mu C_2} {1+t}\| u\|^2_{H^s}+\frac {\eta\gamma C_2} {1+t}\|\rho\|^2_{H^{s-1}}
+\|\nabla_R g\|^2_{H^{s}(\mathcal{L}^{2})}\leq \frac {CC_2} {1+t}\int_{S(t)}|\hat{u}(\xi)|^2+|\hat{\rho}(\xi)|^2 d\xi.
\end{align}
Integrating \eqref{ineq3} over $S(t)$ with $\xi$, then we have
\begin{align}\label{ineq19}
&\int_{S(t)}|\hat{\rho}|^2+|\hat{u}|^2+\|\hat{g}\|^2_{\mathcal{L}^2}d\xi
\leq C\int_{S(t)} (|\hat{\rho}_0|^2+|\hat{u}_0|^2+\|\hat{g}_0\|^2_{\mathcal{L}^2})d\xi+C\int_{S(t)}\int_{0}^{t}|\hat{G}\cdot\bar{\hat{u}}|+|\widehat{\rho u}|^2+|\hat{G}|^2dsd\xi  \\ \notag
&+C_\delta\int_{S(t)}\int_{0}^{t}\int_{B}\psi_\infty|\mathcal{F}(u\cdot\nabla g)|^2+\psi_\infty|\mathcal{F}(\nabla u\cdot{R}g)|^2 dRdsd\xi.
\end{align}
If $E(0)<\infty$ and $(\rho_0,u_0,g_0)\in \dot{B}^{-1}_{2,\infty}\times \dot{B}^{-1}_{2,\infty}\times \dot{B}^{-1}_{2,\infty}(\mathcal{L}^2)$, applying Propositon \ref{pro0}, we have
\begin{align*}
\int_{S(t)}(|\hat{\rho}_0|^2+|\hat{u}_0|^2+\|\hat{g}_0\|^2_{\mathcal{L}^2})d\xi
&\leq\sum_{j\leq \log_2[\frac {4} {3}C_2^{\frac 1 2 }(1+t)^{-\frac 1 2}]}\int_{\mathbb{R}^{d}} 2\varphi^2(2^{-j}\xi)(|\hat{\rho}_0|^2+|\hat{u}_0|^2+\|\hat{g}_0\|^2_{\mathcal{L}^2})d\xi \\
&\leq\sum_{j\leq \log_2[\frac {4} {3}C_2^{\frac 1 2 }(1+t)^{-\frac 1 2}]}(\|\dot{\Delta}_j u_0\|^2_{L^2}+\|\dot{\Delta}_j \rho_0\|^2_{L^2}+\|\dot{\Delta}_j g_0\|^2_{L^2(\mathcal{L}^2)}) \\
&\leq\sum_{j\leq \log_2[\frac {4} {3}C_2^{\frac 1 2 }(1+t)^{-\frac 1 2}]}2^{2j}(\|u_0\|^2_{\dot{B}^{-1}_{2,\infty}}+\|\rho_0\|^2_{\dot{B}^{-1}_{2,\infty}}+\|g_0\|^2_{\dot{B}^{-1}_{2,\infty}(\mathcal{L}^2)}) \\
&\leq C(1+t)^{-1}(\|u_0\|^2_{\dot{B}^{-1}_{2,\infty}}+\|\rho_0\|^2_{\dot{B}^{-1}_{2,\infty}}+\|g_0\|^2_{\dot{B}^{-1}_{2,\infty}(\mathcal{L}^2)}).
\end{align*}
Thanks to Minkowski's inequality, we get
\begin{align*}
\int_{S(t)}\int_{0}^{t}|\widehat{\rho u}|^2dsd\xi
&=\int_{0}^{t}\int_{S(t)}|\widehat{\rho u}|^2 d\xi ds  \\ \notag
&\leq C\int_{S(t)}d\xi \int_{0}^{t}\||\widehat{\rho u}|^2\|_{L^{\infty}}ds \\ \notag
&\leq C(1+t)^{-1} \int_{0}^{t}\|\rho\|^2_{L^{2}}\|u\|^2_{L^{2}}ds,
\end{align*}
and
\begin{align*}
\int_{S(t)}\int_{0}^{t}|\hat{G}|^2dsd\xi
&\leq C\int_{S(t)}d\xi \int_{0}^{t}\||\hat{G}|^2\|_{L^{\infty}}ds \\ \notag
&\leq C(1+t)^{-1}.
\end{align*}
Thanks to Lemma \ref{Lemma1} and Lemma \ref{Lemma2}, we have
\begin{align}\label{ineq20}
&\int_{S(t)}\int_{0}^{t}|\hat{G}\cdot\bar{\hat{u}}|dsd\xi
\leq C(\int_{S(t)}d\xi)^{\frac 1 2} \int_{0}^{t}\|\hat{G}\cdot\bar{\hat{u}}\|_{L^{2}}ds \\ \notag
&\leq C(1+t)^{-\frac 1 2} \int_{0}^{t}(\|u\|^2_{L^{2}}+\|\rho\|^2_{L^{2}})(\|\nabla u\|_{H^{1}}+\|\nabla\rho\|_{L^{2}}+\|\nabla\nabla_R g\|_{L^{2}(\mathcal{L}^{2})})ds.
\end{align}
Using Theorem \ref{th1} and Lemma \ref{Lemma1}, we get
\begin{align}\label{ineq21}
&\int_{S(t)}\int_{0}^{t}\int_{B}\psi_\infty|\mathcal{F}(u\cdot\nabla g)|^2+\psi_\infty|\mathcal{F}(\nabla u\cdot{R}g)|^2 dRdsd\xi \\ \notag
&\leq C(1+t)^{-1} \int_{0}^{t}\|u\|^2_{L^{2}}\|\nabla g\|^2_{L^{2}(\mathcal{L}^{2})}+\|\nabla u\|^2_{L^{2}}\|g\|^2_{L^{2}(\mathcal{L}^{2})}ds \\ \notag
&\leq C(1+t)^{-1}.
\end{align}
Plugging the above estimates into \eqref{ineq19}, we obtain
\begin{align}\label{ineq22}
&\int_{S(t)}|\hat{\rho}(t,\xi)|^2+|\hat{u}(t,\xi)|^2 d\xi\leq  C(1+t)^{-1}+C(1+t)^{-1} \int_{0}^{t}\|\rho\|^2_{L^{2}}\|u\|^2_{L^{2}}ds  \\ \notag
&+C(1+t)^{-\frac 1 2} \int_{0}^{t}(\|u\|^2_{L^{2}}+\|\rho\|^2_{L^{2}})(\|\nabla u\|_{H^{1}}+\|\nabla\rho\|_{L^{2}}+\|\nabla\nabla_R g\|_{L^{2}(\mathcal{L}^{2})})ds.
\end{align}
According to \eqref{ineq18} and \eqref{ineq22}, we deduce that
\begin{align*}
&\frac d {dt} E_\eta(t)+\frac {\mu C_2} {1+t}\| u\|^2_{H^s}+\frac {\eta\gamma C_2} {1+t}\|\rho\|^2_{H^{s-1}}
+\|\nabla_R g\|^2_{H^{s}(\mathcal{L}^{2})}  \\ \notag
&\leq \frac {CC_2} {1+t}[(1+t)^{-1}+(1+t)^{-1} \int_{0}^{t}\|\rho\|^2_{L^{2}}\|u\|^2_{L^{2}}ds  \\ \notag
&+(1+t)^{-\frac 1 2} \int_{0}^{t}(\|u\|^2_{L^{2}}+\|\rho\|^2_{L^{2}})(\|\nabla u\|_{H^{1}}+\|\nabla\rho\|_{L^{2}}+\|\nabla\nabla_R g\|_{L^{2}(\mathcal{L}^{2})})ds.]
\end{align*}
If $C_2$ large enough, then we have
\begin{align}\label{ineq23}
(1+t)^{\frac 3 2}E_\eta(t)&\leq C\int_{0}^{t}\|\Lambda^s \rho\|^2_{L^{2}}(1+s)^{\frac 1 2}ds+C(1+t)^{\frac 1 2}+C\int_{0}^{t}(1+s)^{-\frac 1 2} \int_{0}^{s}\|\rho\|^2_{L^{2}}\|u\|^2_{L^{2}}ds'ds  \\ \notag
&+C(1+t)\int_{0}^{t}(\|u\|^2_{L^{2}}+\|\rho\|^2_{L^{2}})(\|\nabla u\|_{H^{1}}+\|\nabla\rho\|_{L^{2}}+\|\nabla\nabla_R g\|_{L^{2}(\mathcal{L}^{2})})ds\\ \notag
&\leq C(1+t)^{\frac 1 2}+C\int_{0}^{t}(1+s)^{-\frac 1 2} \int_{0}^{t}\|\rho\|^2_{L^{2}}\|u\|^2_{L^{2}}ds'ds  \\ \notag
&+C(1+t)\int_{0}^{t}(\|u\|^2_{L^{2}}+\|\rho\|^2_{L^{2}})(\|\nabla u\|_{H^{1}}+\|\nabla\rho\|_{L^{2}}+\|\nabla\nabla_R g\|_{L^{2}(\mathcal{L}^{2})})ds  \\  \notag
&\leq C(1+t)^{\frac 1 2}+C(1+t)^{\frac 1 2}\int_{0}^{t}\|\rho\|^2_{L^{2}}\|u\|^2_{L^{2}}ds  \\ \notag
&+C(1+t)\int_{0}^{t}(\|u\|^2_{L^{2}}+\|\rho\|^2_{L^{2}})(\|\nabla u\|_{H^{1}}+\|\nabla\rho\|_{L^{2}}+\|\nabla\nabla_R g\|_{L^{2}(\mathcal{L}^{2})})ds.
\end{align}
Define $N(t)=\sup_{0\leq s\leq t}(1+s)^{\frac 1 2}E_\eta(s)$. According to \eqref{ineq23}, we get
\begin{align}\label{ineq24}
N(t)&\leq C+C\sup_{0\leq s\leq t}(1+s)^{-\frac 1 2}\int_{0}^{s}\|\rho\|^2_{L^{2}}\|u\|^2_{L^{2}}ds'  \\ \notag
&+C\int_{0}^{t}(\|u\|^2_{L^{2}}+\|\rho\|^2_{L^{2}})(\|\nabla u\|_{H^{1}}+\|\nabla\rho\|_{L^{2}}+\|\nabla\nabla_R g\|_{L^{2}(\mathcal{L}^{2})})ds  \\ \notag
&\leq C+C\sup_{0\leq s\leq t}\int_{0}^{s}\|\rho\|^2_{L^{2}}\|u\|^2_{L^{2}}(1+s')^{-\frac 1 2}ds'  \\ \notag
&+C\int_{0}^{t}N(s)(1+s)^{-\frac 1 2}(\|\nabla u\|_{H^{1}}+\|\nabla\rho\|_{L^{2}}+\|\nabla\nabla_R g\|_{L^{2}(\mathcal{L}^{2})})ds \\ \notag
&\leq C+C\int_{0}^{t}\|\rho\|^2_{L^{2}}N(s)(1+s)^{-1}ds  \\ \notag
&+C\int_{0}^{t}N(s)(1+s)^{-\frac 1 2}(\|\nabla u\|_{H^{1}}+\|\nabla\rho\|_{L^{2}}+\|\nabla\nabla_R g\|_{L^{2}(\mathcal{L}^{2})})ds.
\end{align}
Applying Gronwall's inequality, Propositions \ref{prop1} and \ref{prop2}, we obtain $N(t)\leq C$,
which implies that
\begin{align}\label{ineq25}
E_\eta(t)\leq C(1+t)^{-\frac 1 2}.
\end{align}
Applying Lemma \ref{Lemma3}, we have
\begin{align}\label{ineq26}
\frac d {dt} E^1_\eta+D^1_\eta\leq 0,
\end{align}
which implies that
\begin{align}\label{ineq27}
&\frac d {dt} E^1_\eta+\frac { C_2} {1+t}(\mu\|\Lambda^1 u\|^2_{H^{s-1}}+\eta\gamma\|\Lambda^1 \rho\|^2_{H^{s-2}})
+\|\Lambda^1 \nabla_R g\|^2_{H^{s-1}(\mathcal{L}^{2})} \\ \notag
&\leq \frac {CC_2} {1+t}\int_{S(t)}|\xi|^2(|\hat{u}(\xi)|^2+|\hat{\rho}(\xi)|^2) d\xi.
\end{align}
According to \eqref{ineq25}, we have
\begin{align*}
\frac {CC_2} {1+t}\int_{S(t)}|\xi|^2(|\hat{u}(\xi)|^2+|\hat{\rho}(\xi)|^2) d\xi\leq C{C_2}^2 (1+t)^{-2}(\|\rho\|^2_{L^2}+\|u\|^2_{L^2})\leq C (1+t)^{-\frac 5 2}.
\end{align*}
This together with \eqref{ineq1}, \eqref{ineq25} and \eqref{ineq27} ensure that
\begin{align*}
(1+t)^{\frac 5 2}E^1_\eta&\leq C(1+t)+C\int_{0}^{t}\|\Lambda^s \rho\|^2_{L^{2}}(1+s)^{\frac 3 2}ds  \\ \notag
&\leq C(1+t)+C\int_{0}^{t}E_\eta(s)(1+s)^{\frac 1 2}ds   \\ \notag
&\leq C(1+t),
\end{align*}
which implies that
\begin{align}\label{ineq28}
E^1_\eta\leq C(1+t)^{-\frac 3 2}.
\end{align}
Therefore, we complete the proof of Proposition \ref{prop3}.
\end{proof}

\begin{rema}
The proposition \ref{prop3} indicates that
$$\|\rho\|_{L^2}+\|u\|_{L^2}\leq C(1+t)^{-\frac 1 4}.$$
Combining with the incompressible FENE model and CNS system, one can see that this is not the optimal time decay.
\end{rema}

In order to obtain optimal $L^2$ decay rate, we have to use the estimate of lower frequency. The following lemma show that the decay rate  implies that the solution will belong to some Besov space with negative index.
\begin{lemm}\label{prop4}
Let $0<\alpha,\sigma\leq 1$ and $\sigma<2\alpha$. Assume that $(\rho_0,u_0,g_0)$ satisfy the condition in Theorem \ref{th2}. For any $t\in [0,+\infty)$, if
\begin{align}\label{decay6}
E_{\eta}(t)\leq C(1+t)^{-\alpha},~~~~E^1_{\eta}(t)\leq C(1+t)^{-\alpha-1}~~~~and~~~~\int_{0}^{t}(1+s)^{\alpha}\|\Lambda^1 \nabla_R g\|^2_{L^{2}(\mathcal{L}^{2})}ds\leq C,
\end{align}
then we have
\begin{align}\label{ineq29}
(\rho,u,g)\in L^{\infty}(0,\infty;\dot{B}^{-\sigma}_{2,\infty})\times L^{\infty}(0,\infty;\dot{B}^{-\sigma}_{2,\infty})\times L^{\infty}(0,\infty;\dot{B}^{-\sigma}_{2,\infty}(\mathcal{L}^2)).
\end{align}
\begin{proof}
Applying $\dot{\Delta}_j$ to \eqref{eq1}, we get
\begin{align}\label{eq8}
\left\{
\begin{array}{ll}
\dot{\Delta}_j\rho_t+div~\dot{\Delta}_j u=\dot{\Delta}_j F,  \\[1ex]
\dot{\Delta}_j u_t-div\Sigma(\dot{\Delta}_j u)+\gamma\nabla\dot{\Delta}_j \rho-div\dot{\Delta}_j\tau=\dot{\Delta}_j G,  \\[1ex]
\dot{\Delta}_j g_t+\mathcal{L}\dot{\Delta}_j g-\nabla\dot{\Delta}_j u R_j \partial_{R_k}\mathcal{U}+div \dot{\Delta}_j u=\dot{\Delta}_j H, \\[1ex]
\end{array}
\right.
\end{align}
where $F=-div(\rho u)$, $G=-u\cdot\nabla u+[i(\rho)-1](div\Sigma(u)+div \tau)+[\gamma-h(\rho)]\nabla\rho$ and $H=-u\cdot\nabla g-\frac 1 {\psi_\infty} \nabla_R\cdot(\nabla u Rg\psi_\infty)$.

Using $\int_{B} \dot{\Delta}_j g\psi_\infty dR=0$ and integrating by parts, then we have
\begin{align}\label{ineq30}
&\frac 1 2 \frac d {dt}(\gamma\|\dot{\Delta}_j \rho\|^2_{L^2}+\|\dot{\Delta}_j u\|^2_{L^2}+\|\dot{\Delta}_j g\|^2_{L^2(\mathcal{L}^{2})}) \\ \notag
&+\mu\|\nabla\dot{\Delta}_j u\|^2_{L^2}+(\mu+\mu')\|div \dot{\Delta}_j u\|^2_{L^2}+\|\nabla_R \dot{\Delta}_j g\|^2_{L^2(\mathcal{L}^{2})}   \\ \notag
&=\int_{\mathbb{R}^{2}} \gamma\dot{\Delta}_j F\dot{\Delta}_j \rho dx+\int_{\mathbb{R}^{2}} \dot{\Delta}_j G\dot{\Delta}_j u dx+\int_{\mathbb{R}^{2}}\int_{B} \dot{\Delta}_j H\dot{\Delta}_j g \psi_\infty dxdR   \\ \notag
&\leq C(\|\dot{\Delta}_j F\|_{L^2}\|\dot{\Delta}_j \rho\|_{L^2}+\|\dot{\Delta}_j G\|_{L^2}\|\dot{\Delta}_j u\|_{L^2}) \\ \notag
&+C(\|\dot{\Delta}_j (u\nabla g)\|^2_{L^2(\mathcal{L}^{2})}+\|\dot{\Delta}_j (\nabla uRg)\|^2_{L^2(\mathcal{L}^{2})})+\frac 1 2 \|\nabla_R \dot{\Delta}_j g\|^2_{L^2(\mathcal{L}^{2})} .
\end{align}
Multiplying both sides of \eqref{ineq30} by $2^{-2j\sigma}$ and taking $l^\infty$ norm, we get
\begin{align}\label{ineq31}
&\frac d {dt}(\gamma\|\rho\|^2_{\dot{B}^{-\sigma}_{2,\infty}}+\|u\|^2_{\dot{B}^{-\sigma}_{2,\infty}}+\|g\|^2_{\dot{B}^{-\sigma}_{2,\infty}(\mathcal{L}^{2})})  \\ \notag
&\leq C(\|F\|_{\dot{B}^{-\sigma}_{2,\infty}}\|\rho\|_{\dot{B}^{-\sigma}_{2,\infty}}+\|G\|_{\dot{B}^{-\sigma}_{2,\infty}}\|u\|_{\dot{B}^{-\sigma}_{2,\infty}}+ \|\nabla uRg\|^2_{\dot{B}^{-\sigma}_{2,\infty}(\mathcal{L}^{2})}+\|u\nabla g\|^2_{\dot{B}^{-\sigma}_{2,\infty}(\mathcal{L}^{2})}).
\end{align}
Define $M(t)=\sum_{0\leq s\leq t} \|\rho\|_{\dot{B}^{-\sigma}_{2,\infty}}+\|u\|_{\dot{B}^{-\sigma}_{2,\infty}}+\|g\|_{\dot{B}^{-\sigma}_{2,\infty}(\mathcal{L}^{2})}$. According to \eqref{ineq31}, we deduce that
\begin{align}\label{ineq32}
M^2(t)&\leq M^2(0)+CM(t)\int_0^{t}\|F\|_{\dot{B}^{-\sigma}_{2,\infty}}+\|G\|_{\dot{B}^{-\sigma}_{2,\infty}}ds  \\  \notag
&+C\int_0^{t}\|\nabla uRg\|^2_{\dot{B}^{-\sigma}_{2,\infty}(\mathcal{L}^{2})}+\|u\nabla g\|^2_{\dot{B}^{-\sigma}_{2,\infty}(\mathcal{L}^{2})}ds.
\end{align}
Using Lemmas \ref{Lemma}, \ref{Lemma0} and \eqref{decay6}, we obtain
\begin{align*}
\int_0^{t}\|\nabla uRg\|^2_{\dot{B}^{-\sigma}_{2,\infty}(\mathcal{L}^{2})}+\|u\nabla g\|^2_{\dot{B}^{-\sigma}_{2,\infty}(\mathcal{L}^{2})}ds
&\leq C\int_0^{t}\|\nabla uRg\|^2_{L^{\frac {2} {\sigma+1}}(\mathcal{L}^{2})}+\|u\nabla g\|^2_{L^{\frac {2} {\sigma+1}}(\mathcal{L}^{2})}ds  \\
&\leq C\int_0^{t}\|\nabla u\|^2_{L^2}\|g\|^2_{L^{\frac {2} {\sigma}}(\mathcal{L}^{2})}+\|\nabla g\|^2_{L^2(\mathcal{L}^{2})}\|u\|^2_{L^{\frac {2} {\sigma}}}ds\leq C,
\end{align*}
and
\begin{align*}
\int_0^{t}\|F\|_{\dot{B}^{-\sigma}_{2,\infty}}ds
&\leq C\int_0^{t}\|F\|_{L^{\frac {2} {\sigma+1}}}ds  \\
&\leq C\int_0^{t}\|u\|_{L^\frac {2} {\sigma}}\|\nabla\rho\|_{L^2}+\|div~u\|_{L^2}\|\rho\|_{L^\frac {2} {\sigma}}ds   \\
&\leq C\int_0^{t}\|u\|^{\sigma}_{L^2}\|\nabla u\|^{1-\sigma}_{L^2}\|\nabla\rho\|_{L^2}+\|div~u\|_{L^2}\|\rho\|^{\sigma}_{L^2}\|\nabla\rho\|^{1-\sigma}_{L^2}ds   \\
&\leq C\int_0^{t}(1+s)^{-(1+\alpha-\frac \sigma 2)}ds\leq C.
\end{align*}
By virtue of \eqref{decay6}, we similarly get
\begin{align}\label{ineq33}
\int_0^{t}\|G\|_{\dot{B}^{-\sigma}_{2,\infty}}ds
&\leq C\int_0^{t}(1+s)^{-(1+\alpha-\frac \sigma 2)}ds+C\int_0^{t}\|div~\tau\|_{L^2}\|\rho\|_{L^\frac {2} {\sigma}}ds  \\ \notag
&\leq C+C\int_0^{t}\|\rho\|^{\sigma}_{L^2}\|\nabla\rho\|^{1-\sigma}_{L^2}\|\nabla\nabla_R g\|_{L^2(\mathcal{L}^{2})}ds  \\ \notag
&\leq C+C(\int_0^{t}\|\rho\||^{2\sigma}_{L^2}\|\nabla\rho\|^{2-2\sigma}_{L^2}(1+s)^{-\alpha}ds)^{\frac 1 2}(\int_0^{t}(1+s)^{\alpha}\|\nabla\nabla_R g\|^2_{L^2(\mathcal{L}^{2})}ds)^{\frac 1 2}   \\ \notag
&\leq C+C(\int_0^{t}(1+s)^{-(1+2\alpha-\sigma)}ds)^{\frac 1 2}\leq C.
\end{align}
From \eqref{ineq32}, we have $M^2(t)\leq CM^2(0)+M(t)C+C$. By virtue of interpolation theory, we can deduce that $(\rho_0,u_0,g_0)\in \dot{B}^{-\sigma}_{2,\infty}\times \dot{B}^{-\sigma}_{2,\infty}\times \dot{B}^{-\sigma}_{2,\infty}(\mathcal{L}^2)$ with $0<\sigma\leq1$. Then $M^2(0)\leq C$ implies that $M(t)\leq C$.
\end{proof}

\end{lemm}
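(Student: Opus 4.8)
The plan is to estimate the solution in the Besov norm $\dot B^{-\sigma}_{2,\infty}$ directly from the system \eqref{eq1}, localizing in frequency with $\dot\Delta_j$ and exploiting the cancellation between the $\rho$-$u$ coupling and between $u$ and $\tau$ exactly as in the Fourier-side computation of Proposition \ref{prop1}. Applying $\dot\Delta_j$ to \eqref{eq1} gives \eqref{eq8}; taking the $L^2$ (resp.\ $L^2_x(\mathcal L^2)$) inner product of the three equations with $\gamma\dot\Delta_j\rho$, $\dot\Delta_j u$, $\dot\Delta_j g$ and adding, the terms $\gamma\int\nabla\dot\Delta_j\rho\cdot\dot\Delta_j u$ and $\int\mathrm{div}\,\dot\Delta_j u\,\dot\Delta_j\rho$ cancel, and the two occurrences of $\mathrm{div}\,\dot\Delta_j\tau$ and of $\mathrm{div}\,\dot\Delta_j u\,R\nabla_R\mathcal U$ cancel against each other because $\int_B \mathrm{div}\,\dot\Delta_j u\,\dot\Delta_j g\,\psi_\infty\,dR = 0$ (this is the identity $\int_B i\xi_k\hat u^k\bar{\hat g}\psi_\infty dR=0$ used already). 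What survives is the energy identity \eqref{ineq30}. I would absorb the dissipation term $\tfrac12\|\nabla_R\dot\Delta_j g\|^2_{L^2(\mathcal L^2)}$ coming from Lemma \ref{Lemma1}/\ref{Lemma2} into the left side, multiply through by $2^{-2j\sigma}$, take $\sup_j$, and integrate in time to obtain \eqref{ineq32}, where $M(t)=\sup_{0\le s\le t}(\|\rho\|_{\dot B^{-\sigma}_{2,\infty}}+\|u\|_{\dot B^{-\sigma}_{2,\infty}}+\|g\|_{\dot B^{-\sigma}_{2,\infty}(\mathcal L^2)})(s)$.

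Next I would bound the four source integrals on the right of \eqref{ineq32}. For the quadratic-in-$(u,g)$ terms I use $\dot B^{-\sigma}_{2,\infty}\hookleftarrow L^{2/(\sigma+1)}$ (Lemma \ref{Lemma}), then Hölder, then Gagliardo--Nirenberg (Lemma \ref{Lemma0}) to trade derivatives for $L^2$ decay: e.g.\ $\|\nabla u\,Rg\|^2_{\dot B^{-\sigma}_{2,\infty}(\mathcal L^2)}\lesssim \|\nabla u\|^2_{L^2}\|g\|^2_{L^{2/\sigma}(\mathcal L^2)}$, and each factor is controlled by \eqref{decay6}; the exponent count makes the time integral converge. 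The linear term $F=-\mathrm{div}(\rho u)$ splits as $u\cdot\nabla\rho+\rho\,\mathrm{div}\,u$, and after the same Besov embedding and interpolation one gets an integrand $\lesssim (1+s)^{-(1+\alpha-\sigma/2)}$, integrable since $\sigma<2\alpha\le 2$. The term $G$ is the delicate one: most pieces of $G=-u\cdot\nabla u+[i(\rho)-1](\mathrm{div}\,\Sigma(u)+\mathrm{div}\,\tau)+[\gamma-h(\rho)]\nabla\rho$ are handled like $F$, but the contribution of $\mathrm{div}\,\tau$ needs $|\tau(g)|\lesssim \|\nabla_R g\|_{\mathcal L^2}$ and then a Cauchy--Schwarz in time against the weighted bound $\int_0^t(1+s)^\alpha\|\Lambda^1\nabla_R g\|^2_{L^2(\mathcal L^2)}ds\le C$; this is precisely why that third hypothesis in \eqref{decay6} is included. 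Putting these together yields $M^2(t)\le CM^2(0)+C M(t)+C$.

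Finally, since the quadratic inequality $M^2\le CM^2(0)+CM+C$ forces $M(t)\le C$ once $M(0)<\infty$, it remains to check $M(0)<\infty$, i.e.\ that the initial data lie in $\dot B^{-\sigma}_{2,\infty}$. This follows from the hypothesis $(\rho_0,u_0,g_0)\in\dot B^{-1}_{2,\infty}\times\dot B^{-1}_{2,\infty}\times\dot B^{-1}_{2,\infty}(\mathcal L^2)$ together with $E(0)<\infty$ (which gives $L^2$, hence $\dot B^0_{2,\infty}$, control) by interpolating $\dot B^{-\sigma}_{2,\infty}$ between $\dot B^{-1}_{2,\infty}$ and $L^2$ for $0<\sigma\le 1$. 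I expect the main obstacle to be the bookkeeping of the Gagliardo--Nirenberg exponents so that every time integral is finite — in particular verifying that the $\mathrm{div}\,\tau$ piece of $G$ really is square-integrable against the weight $(1+s)^\alpha$, which is where the condition $\sigma<2\alpha$ enters — rather than anything conceptually new beyond the cancellations already exploited in Proposition \ref{prop1}.
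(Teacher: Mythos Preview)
Your proposal is correct and follows essentially the same route as the paper: localize with $\dot\Delta_j$, exploit the $\rho$--$u$ and $u$--$\tau$ cancellations to obtain the energy identity \eqref{ineq30}, pass to $\dot B^{-\sigma}_{2,\infty}$ norms, and bound the source terms via the embedding $L^{2/(\sigma+1)}\hookrightarrow\dot B^{-\sigma}_{2,\infty}$, H\"older, and Gagliardo--Nirenberg, isolating the $\mathrm{div}\,\tau$ contribution in $G$ for a Cauchy--Schwarz against the weighted hypothesis on $\|\Lambda^1\nabla_R g\|^2_{L^2(\mathcal L^2)}$. Your identification of where $\sigma<2\alpha$ is used and your interpolation argument for $M(0)<\infty$ also match the paper exactly.
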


Using the above lemma one can prove that the solution belongs to some Besov space. The following proposition indicates that if the solution belongs to some Besov space with negative index, then the decay rate can be improved.

\begin{prop}\label{prop5}
Let $0<\beta,\sigma\leq 1$ and $\frac {1} {2} \leq\alpha$. Assume that $(\rho_0,u_0,g_0)$ satisfy the condition in Theorem \ref{th2}.
For any $t\in [0,+\infty)$, if
\begin{align}\label{decay7}
E_{\eta}(t)\leq C(1+t)^{-\alpha},~~~~E^1_{\eta}(t)\leq C(1+t)^{-\alpha-1}~~~~and~~~~\int_{0}^{t}(1+s)^{\frac 3 4}\|\Lambda^1 \nabla_R g\|^2_{L^{2}(\mathcal{L}^{2})}ds\leq C,
\end{align}
and
\begin{align}\label{ineq34}
(\rho,u,g)\in L^{\infty}(0,\infty;\dot{B}^{-\sigma}_{2,\infty})\times L^{\infty}(0,\infty;\dot{B}^{-\sigma}_{2,\infty})\times L^{\infty}(0,\infty;\dot{B}^{-\sigma}_{2,\infty}(\mathcal{L}^2)),
\end{align}
then there exists a constant $C$ such that
\begin{align}\label{decay8}
E_{\eta}(t)\leq C(1+t)^{-\beta}~~~~and~~~~E^1_{\eta}(t)\leq C(1+t)^{-\beta-1}
\end{align}
where $\beta<\frac {\sigma+1} {2}$ for $\alpha=\frac {1} {2}$ and $\beta=\frac {\sigma+1} {2}$ for $\alpha>\frac {1} {2}$.
\end{prop}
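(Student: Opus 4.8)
The plan is to run the time-weighted Fourier-splitting scheme of Proposition~\ref{prop3} one more time, but with a sharper low-frequency estimate that uses the negative Besov regularity \eqref{ineq34}; the gain $\frac{\sigma+1}{2}=\frac{\sigma}{2}+\frac12$ will come out as a Besov gain $\frac{\sigma}{2}$ plus the two-dimensional volume/heat gain $\frac12$. Fix $S(t)=\{\xi:|\xi|^2\leq C_2(1+t)^{-1}\}$ with $C_2$ large, write $r(t)=\sqrt{C_2}(1+t)^{-1/2}$ (so $|S(t)|\sim C_2(1+t)^{-1}$), and use a weight $f(t)=(1+t)^A$ with $A$ chosen much larger than $\frac{\sigma+1}{2}$ and than $1+\alpha$. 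Splitting the dissipation of $\rho,u$ over $S(t)$ and $S(t)^c$ in $\frac{d}{dt}E_\eta+D_\eta\leq0$ (the $g$-dissipation needs no splitting since $\|\nabla_R\hat g\|^2_{\mathcal{L}^2}\gtrsim\|\hat g\|^2_{\mathcal{L}^2}$ by Lemma~\ref{Lemma1}), one obtains, exactly as in \eqref{ineq1}--\eqref{ineq2},
\begin{align*}
\frac{d}{dt}\big[f(t)E_\eta(t)\big]+C_2f'(t)\big(\mu\|u\|^2_{H^s}+\eta\gamma\|\rho\|^2_{H^{s-1}}\big)+f(t)\|\nabla_Rg\|^2_{H^s(\mathcal{L}^2)}\leq Cf'(t)\int_{S(t)}\big(|\hat\rho|^2+|\hat u|^2\big)\,d\xi+f'(t)\|\Lambda^s\rho\|^2_{L^2},
\end{align*}
and the same splitting applied to $\frac{d}{dt}E^1_\eta+D^1_\eta\leq0$ (Lemma~\ref{Lemma3}) gives the analogous inequality with an extra $|\xi|^2$ under the integral.

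The heart of the matter is to show $\int_{S(t)}(|\hat\rho(t,\xi)|^2+|\hat u(t,\xi)|^2)\,d\xi\lesssim(1+t)^{-\frac{\sigma+1}{2}}$ (up to a factor $\ln(1+t)$ when $\alpha=\frac12$). I would work with the combined quantity $\gamma|\hat\rho|^2+|\hat u|^2+\|\hat g\|^2_{\mathcal{L}^2}+2(\mu+\mu')\mathcal{R}e[\hat\rho\,i\xi\cdot\bar{\hat u}]$ and the identity \eqref{eq7}: on $S(t)$ for $t$ large the cross term is negligible, and every \emph{linear} term in \eqref{eq7} is absorbed by the dissipation --- in particular the stress-tensor term, which by the Fourier cancellation $\mathcal{R}e[i\xi\otimes\bar{\hat u}:\hat\tau]+\mathcal{R}e[i\xi\otimes\hat u:\bar{\hat\tau}]=0$ survives only as $|\xi|^2\hat\rho:\bar{\hat\tau}$, hence at two powers of $\xi$. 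Thus, just as in the derivation of \eqref{ineq3}, one gets for $\xi\in S(t)$
\begin{align*}
|\hat\rho|^2+|\hat u|^2+\|\hat g\|^2_{\mathcal{L}^2}\lesssim|\hat\rho_0|^2+|\hat u_0|^2+\|\hat g_0\|^2_{\mathcal{L}^2}+\int_0^t\big(|\hat G\cdot\bar{\hat u}|+|\widehat{\rho u}|^2+|\hat G|^2\big)ds+\int_0^t\!\!\int_B\psi_\infty\big(|\mathcal{F}(u\cdot\nabla g)|^2+|\mathcal{F}(\nabla u\cdot Rg)|^2\big)dR\,ds.
\end{align*}
Integrating over $S(t)$: the initial-data term and all the \emph{squared} nonlinear terms are each bounded by $|S(t)|\sim C_2(1+t)^{-1}$ times a finite time integral --- the data via $r(t)^2\|(\rho_0,u_0,g_0)\|^2_{\dot B^{-1}_{2,\infty}\times\dot B^{-1}_{2,\infty}\times\dot B^{-1}_{2,\infty}(\mathcal{L}^2)}$ (hypothesis of Theorem~\ref{th2}), and $\int_0^t\|\rho\|^2_{L^2}\|u\|^2_{L^2}ds$, $\int_0^t\|G\|^2_{L^1}ds$, $\int_0^t(\|u\|^2_{L^2}\|\nabla g\|^2_{L^2(\mathcal{L}^2)}+\|\nabla u\|^2_{L^2}\|g\|^2_{L^2(\mathcal{L}^2)})ds$ all finite by \eqref{decay7} (Lemma~\ref{Lemma2} yields $\|G\|_{L^1}\lesssim(1+s)^{-\alpha-\frac12}+(1+s)^{-\alpha/2}\|\Lambda^1\nabla_Rg\|_{L^2(\mathcal{L}^2)}$, whose $\nabla_Rg$-piece is absorbed by Cauchy--Schwarz in time against the weight $(1+s)^{3/4}$ in \eqref{decay7}) --- so each of these contributes only $\lesssim(1+t)^{-1}$. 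The genuinely new contribution is the \emph{product}, for which Cauchy--Schwarz in $\xi$ gives
\begin{align*}
\int_{S(t)}|\hat G(s)\cdot\bar{\hat u}(s)|\,d\xi\leq\Big(\int_{S(t)}|\hat G(s)|^2d\xi\Big)^{\!1/2}\Big(\int_{S(t)}|\hat u(s)|^2d\xi\Big)^{\!1/2}\lesssim\big(\|G(s)\|_{L^1}|S(t)|^{1/2}\big)\big(r(t)^{\sigma}\|u(s)\|_{\dot B^{-\sigma}_{2,\infty}}\big)\lesssim r(t)^{1+\sigma}\|G(s)\|_{L^1},
\end{align*}
the second factor using the uniform bound \eqref{ineq34}; since $r(t)^{1+\sigma}\sim(1+t)^{-\frac{\sigma+1}{2}}$ and $\int_0^t\|G(s)\|_{L^1}ds\lesssim1$ for $\alpha>\frac12$ (and $\lesssim\ln(1+t)$ for $\alpha=\frac12$), the claim follows, since for $\sigma\leq1$ the term $(1+t)^{-\frac{\sigma+1}{2}}$ dominates the $(1+t)^{-1}$ coming from the other terms.

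Inserting this into the inequality of the first paragraph, bounding $\int_0^tf'(s)\|\Lambda^s\rho\|^2_{L^2}ds\lesssim(1+t)^{A-1-\alpha}$ via $\|\Lambda^s\rho\|^2_{L^2}\lesssim E^1_\eta\lesssim(1+s)^{-\alpha-1}$, and integrating in time gives $(1+t)^AE_\eta(t)\lesssim C+(1+t)^{A-\frac{\sigma+1}{2}}[1\text{ or }\ln(1+t)]+(1+t)^{A-1-\alpha}$; since $A$ is large and $\frac{\sigma+1}{2}\leq1<1+\alpha$, this yields $E_\eta(t)\leq C(1+t)^{-\frac{\sigma+1}{2}}$ when $\alpha>\frac12$, and $E_\eta(t)\leq C(1+t)^{-\beta}$ for every $\beta<\frac{\sigma+1}{2}$ when $\alpha=\frac12$ (the logarithm forcing strict inequality). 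The bound $E^1_\eta(t)\leq C(1+t)^{-\beta-1}$ then follows exactly as in \eqref{ineq26}--\eqref{ineq28}: from Lemma~\ref{Lemma3}, split as above, bound $f'(t)\int_{S(t)}|\xi|^2(|\hat u|^2+|\hat\rho|^2)d\xi\lesssim f'(t)r(t)^2(\|\rho\|^2_{L^2}+\|u\|^2_{L^2})\lesssim f'(t)(1+t)^{-1-\beta}$ by the $E_\eta$-decay just proved, and control $\int_0^tf'(s)\|\Lambda^s\rho\|^2_{L^2}ds\lesssim(1+t)^{A-1-\beta}$ by integrating the weighted form of $\frac{d}{dt}E_\eta+D_\eta\leq0$ as in \eqref{ineq23}.

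\emph{Main obstacle.} The delicate part is the second paragraph, for three intertwined reasons: (i) one must work with the combined quantity in \eqref{eq7} so that the non-decaying linear $div\,\tau$ coupling cancels and only reappears at order $|\xi|^2$, hence is absorbable; (ii) the split of $\int_{S(t)}|\hat G\cdot\bar{\hat u}|\,d\xi$ must put the volume factor $|S(t)|^{1/2}\sim r(t)$ on $\hat G$ and the Besov bound $r(t)^{\sigma}$ on $\hat u$, which is exactly what turns $r(t)^{1}\cdot r(t)^{\sigma}$ into the exponent $\frac{\sigma+1}{2}$; and (iii) the slowly decaying $\Lambda^1\nabla_Rg$ contributions to $\|G\|_{L^1}$ must be absorbed through the time-weighted hypothesis \eqref{decay7}. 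The logarithm appears precisely when $\alpha=\frac12$ (in $\int_0^t(1+s)^{-\alpha-\frac12}ds$), which is what forces the strict inequality $\beta<\frac{\sigma+1}{2}$ in that case and allows the endpoint $\beta=\frac{\sigma+1}{2}$ when $\alpha>\frac12$.
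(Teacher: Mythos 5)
Your proposal is correct and follows essentially the same route as the paper: the decisive step — bounding $\int_{S(t)}|\hat G\cdot\bar{\hat u}|\,d\xi$ by $\|G\|_{L^1}\,|S(t)|^{1/2}\,\bigl(\int_{S(t)}|\hat u|^2d\xi\bigr)^{1/2}\lesssim (1+t)^{-\frac{\sigma+1}{2}}\|G\|_{L^1}\|u\|_{\dot B^{-\sigma}_{2,\infty}}$, with the $\rho\,div\,\tau$ part of $\|G\|_{L^1}$ absorbed by Cauchy--Schwarz in time against the $(1+s)^{3/4}$ weight in \eqref{decay7} — is exactly the paper's computation, and your time-weighted Fourier-splitting conclusion for $E_\eta$ and then $E^1_\eta$ matches the paper's appeal to the arguments of Proposition \ref{prop3} and \eqref{ineq28} (your generic large weight $(1+t)^A$ versus the paper's specific exponents, and your explicit tracking of the logarithm at $\alpha=\tfrac12$, are only cosmetic differences).
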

\begin{proof}
According to \eqref{decay7}, we obtain
\begin{align*}
\int_{S(t)}\int_{0}^{t}|\widehat{\rho u}|^2dsd\xi\leq C(1+t)^{-1} \int_{0}^{t}\|\rho\|^2_{L^{2}}\|u\|^2_{L^{2}}ds\leq C(1+t)^{-1} \int_{0}^{t}(1+s)^{-2\alpha}ds\leq C(1+t)^{-\beta}.
\end{align*}
By virtue of \eqref{decay7} and \eqref{ineq34}, we have
\begin{align*}
\int_{S(t)}\int_{0}^{t}|\hat{G}\cdot\bar{\hat{u}}|dsd\xi
&\leq C\int_{0}^{t}\|G\|_{L^{1}}\int_{S(t)}|\hat{u}|d\xi ds \\
&\leq C(1+t)^{-\frac 1 2}\int_{0}^{t}\|G\|_{L^{1}}(\int_{S(t)}|\hat{u}|^2d\xi)^{\frac 1 2} ds  \\
&\leq C(1+t)^{-\frac {\sigma+1} {2}}M(t)\int_{0}^{t}\|G\|_{L^{1}}ds  \\
&\leq C(1+t)^{-\frac {\sigma+1} {2}}\int_{0}^{t}(1+s)^{-\alpha-\frac 1 2}+\|\rho\|_{L^{2}}\|\nabla\nabla_R g\|_{L^2(\mathcal{L}^{2})}ds  \\
&\leq C(1+t)^{-\beta}+C(1+t)^{-\frac {\sigma+1} {2}}(\int_{0}^{t}\|\rho\|^2_{L^{2}}(1+s)^{-\frac 3 4}ds)^{\frac 1 2}  \\
&\leq C(1+t)^{-\beta}.
\end{align*}
 Similar to the proof of Proposition \ref{prop3}, we obtain
\begin{align}\label{ineq35}
\frac d {dt} E_\eta(t)+\frac {\mu C_2} {1+t}\| u\|^2_{H^s}+\frac {\eta\gamma C_2} {1+t}\|\rho\|^2_{H^{s-1}}
+\|\nabla_R g\|^2_{H^{s}(\mathcal{L}^{2})}\leq \frac {CC_2} {1+t}(1+t)^{-\beta}.
\end{align}
Then the proof of \eqref{ineq28} implies that
\begin{align}\label{ineq36}
E_\eta(t)\leq C(1+t)^{-\beta}.
\end{align}
Using Lemma \ref{Lemma3}, we have
\begin{align}\label{ineq37}
&\frac d {dt} E^1_\eta+\frac { C_1} {1+t}(\mu\|\Lambda^1 u\|^2_{H^{s-1}}+\eta\gamma\|\Lambda^1 \rho\|^2_{H^{s-2}})
+\|\Lambda^1 \nabla_R g\|^2_{H^{s-1}(\mathcal{L}^{2})}  \\ \notag
&\leq \frac {CC_1} {1+t}\int_{S(t)}|\xi|^2(|\hat{u}(\xi)|^2+|\hat{\rho}(\xi)|^2) d\xi.
\end{align}
According to \eqref{ineq36}, we have
\begin{align}
\frac {CC_2} {1+t}\int_{S(t)}|\xi|^2(|\hat{u}(\xi)|^2+|\hat{\rho}(\xi)|^2) d\xi\leq C{C_2}^2 (1+t)^{-2}(\|\rho\|^2_{L^2}+\|u\|^2_{L^2})\leq C (1+t)^{-2-\beta}.
\end{align}
Then the proof of \eqref{ineq28} implies that $E^1_\eta\leq C(1+t)^{-1-\beta}$.
We thus finish the proof of Proposition \ref{prop5}.
\end{proof}

{\bf The proof of Theorem \ref{th2}:}  \\
We now improve the decay rate in Proposition \ref{prop3}. According to Propositions \ref{prop2}, \ref{prop3} and Lemma \ref{prop4} with $\sigma=\alpha=\frac 1 2$, we obtain
\begin{align*}
(\rho,u,g)\in L^{\infty}(0,\infty;\dot{B}^{-\frac 1 2}_{2,\infty})\times L^{\infty}(0,\infty;\dot{B}^{-\frac 1 2}_{2,\infty})\times L^{\infty}(0,\infty;\dot{B}^{-\frac 1 2}_{2,\infty}(\mathcal{L}^2)).
\end{align*}
Taking advantage of Propositions \ref{prop5} with $\alpha=\sigma=\frac 1 2$ and $\beta=\frac 5 8$, we deduce that
\begin{align*}
E_{\eta}(t)\leq C(1+t)^{-\frac 5 8}~~~~and~~~~E^1_{\eta}(t)\leq C(1+t)^{-\frac 5 8-1}.
\end{align*}
According to Proposition \ref{prop2} and Lemma \ref{prop4} with $\sigma=1$ and $\alpha=\frac 5 8$, we obtain
\begin{align*}
(\rho,u,g)\in L^{\infty}(0,\infty;\dot{B}^{-1}_{2,\infty})\times L^{\infty}(0,\infty;\dot{B}^{-1}_{2,\infty})\times L^{\infty}(0,\infty;\dot{B}^{-1}_{2,\infty}(\mathcal{L}^2)).
\end{align*}
Using Propositions \ref{prop5} again with $\alpha=\frac 5 8$ and $\sigma=\beta=1$, we verify that
\begin{align*}
E_{\eta}(t)\leq C(1+t)^{-1}~~~~and~~~~E^1_{\eta}(t)\leq C(1+t)^{-2}.
\end{align*}

To get the faster decay rate for $g$ in $L^2$, we need the following standard energy estimation to \eqref{eq1}:
\begin{align*}
\frac {1} {2}\frac {d} {dt} \|g\|^2_{L^2(\mathcal{L}^{2})}+\|\nabla_R g\|^2_{L^2(\mathcal{L}^{2})}\lesssim -\int_{\mathbb{R}^{d}}\nabla u:\tau dx+\|\nabla u\|_{L^\infty}\|g\|_{L^2(\mathcal{L}^{2})}\|\nabla_R g\|_{L^2(\mathcal{L}^{2})}.
\end{align*}
By Lemma \ref{Lemma1}, Lemma \ref{Lemma2}, we have
\begin{align*}
\int_{\mathbb{R}^{d}}\nabla u:\tau dx\leq \delta\|\nabla_R g\|^2_{L^2(\mathcal{L}^{2})}+C_\delta \|\nabla u\|^2_{L^2}.
\end{align*}
Using Lemma \ref{Lemma1} and Theorem \ref{th1}, we deduce that
\begin{align*}
\frac d {dt} \|g\|^2_{L^2(\mathcal{L}^2)}+\|g\|^2_{L^2(\mathcal{L}^2)}\leq C_\delta \|\nabla u\|^2_{L^2},
\end{align*}
which implies that
\begin{align*}
\|g\|^2_{L^2(\mathcal{L}^2)}
&\leq \|g_0\|^2_{L^2(\mathcal{L}^2)}e^{-t}+C\int_{0}^{t}e^{-(t-s)}\|\nabla u\|^2_{L^2}ds  \\
&\leq C(e^{-t}+\int_{0}^{t}e^{-(t-s)}(1+s)^{-2}ds)  \\
&\leq C(1+t)^{-2}.
\end{align*}
We thus complete the proof of Theorem \ref{th2}.
\hfill$\Box$

\begin{rema}
One can see that the decay rate for the $\dot{H}^1$-norm obtained in Propositions \ref{prop3} is not optimal. However, in the proof of Theorem \ref{th2}, we improve the decay rate to $(1+t)^{-1}$.
\end{rema}

\begin{rema}
In Theorem \ref{th2}, we only obtain the optimal decay rate with $d=2$. The decay rate for $d=1$ is an interesting problem. However, the technique in this paper fail to obtain the optimal decay rate when $d=1$. We are going to study about this problem in the future.
\end{rema}

\smallskip
\noindent\textbf{Acknowledgments} This work was
partially supported by the National Natural Science Foundation of China (No.11671407 and No.11701586), the Macao Science and Technology Development Fund (No. 098/2013/A3), and Guangdong Province of China Special Support Program (No. 8-2015),
and the key project of the Natural Science Foundation of Guangdong province (No. 2016A030311004).


\phantomsection
\addcontentsline{toc}{section}{\refname}
\bibliographystyle{abbrv} 
\bibliography{Feneref}

\end{document}